\newtheorem{theorem}{Theorem}
\newtheorem{lemma}{Lemma}[section]
\newtheorem{coro}[lemma]{Corollary}
\newtheorem{propo}[lemma]{Proposition}
\title{Superidentities for the algebras $UT_2$ and  $UT_3$ \\on a finite field}
\author{Ronald Ismael Quispe Urure \thanks{This study was financed by the Coordenação de Aperfeiçoamento de Pessoal de Nível Superior - Brasil (CAPES) - Finance Code 001.}
	\\
	Departamento de Matemática, Universidade Federal de Juiz de Fora\\
	36036-900, Juiz de Fora - MG, Brazil\\
	e-mail: \texttt{urure6@gmail.com}\\ 
	\\
	Tatiana Aparecida Gouveia 
	\\
	Departamento de Matemática, Universidade Federal de Juiz de Fora\\
	36036-900, Juiz de Fora - MG, Brazil\\
	e-mail: \texttt{tatiana.gouveia@ufjf.edu.br}}
\begin{document}
	
\maketitle

\noindent\textbf{Keywords:} 	Graded identities,  Basis of identities,  Upper triangular matrices,  Finite fields.

\noindent\textbf{2010 AMS MSC Classification:} 15A33, 16R10, 16R20, 16R50, 16W50.

\begin{abstract}
Let $ F $ be a finite field and consider $ UT_n $ the algebra of $ n\times n $ upper triangular matrices over $ F $.
In \cite{valentizaicev2007}, it was proved that every $ G $-grading is elementary.
In \cite{divincenzo2004}, the authors classified all nonisomorphic elementary $ G $-gradings.
They also described the set of all $G $-graded polynomial identities
for $ UTn $ when $ F $ is an infinite field.
In \cite{riva2020}, was described the all $ G $-graded polynomial identities for $ UT_n $ when $ F $ is a finite field.
In this work, we will discuss   the  case when $G = \mathbb{Z}_2$, $ n=2, 3 $ and $ F $ is a finite field. 
\end{abstract}


\section{Preliminaries}	

\hspace{0.4cm} 
Let $\mathcal{A}$ be an associative algebra over a field $F$.
The (left normed) \textit{commutator} of elements in $\mathcal{A}$ is defined inductively by   
$ 	[u_{1},u_{2}] = u_{1} u_{2} - u_{2}u_{1}$ and 
$	[u_{1},	\ldots ,u_m	] = [[u_{1},	\ldots,	u_{m-1}], u_{m} ] $ for $m>2$, $ u_{1},	\ldots ,u_m \in \mathcal{A} $.
We will need of a notation for the powers inside of the commutators. Define inductively, $[u, v^{(0)}] = u$ and $[u,v^{(r)}] = [u,v^{(r-1)},v] $ for $r\geqslant 1$, $u,v \in \mathcal{A}$.

We say that  $\mathcal{A}$ is a \textit{superalgebra} with grading $(\mathcal{A}^{(0)}, \mathcal{A}^{(1)})$
if $\mathcal{A} = \mathcal{A}^{(0)} \oplus \mathcal{A}^{(1)}$ where $\mathcal{A}^{(0)}$ and $\mathcal{A}^{(1)}$ are vector subspaces
satisfying $\mathcal{A}^{(0)}\mathcal{A}^{(0)} + \mathcal{A}^{(1)}\mathcal{A}^{(1)} \subseteq \mathcal{A}^{(0)}$
and $\mathcal{A}^{(0)}\mathcal{A}^{(1)} + \mathcal{A}^{(1)} \mathcal{A}^{(0)} \subseteq \mathcal{A}^{(1)}$.
Let  $\mathcal{B}$ a subalgebra of $\mathcal{A}$. 
If $ \mathcal{B} $ is a superalgebra with grading $(\mathcal{A}^{(0)}\cap \mathcal{B}, \mathcal{A}^{(1)}\cap \mathcal{B})$,
then we say that $\mathcal{B}$ is a \textit{graded subalgebra} of $\mathcal{A}$ induced  by grading $ (\mathcal{A}^{(0)}, \mathcal{A}^{(1)}) $.

Let $\mathcal{A}$ and $\mathcal{B}$ be superalgebras and consider $\varphi: \mathcal{A} \rightarrow \mathcal{B}$ a homomorphism of algebras. We say that $\varphi$ is a \textit{homomorphism of superalgebras} if 
$ \varphi $ preserves the grading, i.e.,
$\varphi (\mathcal{A}^{(0)}) \subseteq \mathcal{B}^{(0)}$
and $\varphi (\mathcal{A}^{(1)}) \subseteq \mathcal{B}^{(1)}$.
The endomorphisms and  isomorphisms  of superalgebras  are  defined in the usual way.

Let  $F\langle Y \cup Z \rangle$ be the free (non unitary) associative algebra over $F$, freely generated 
by $ Y\cup Z $ where  
$Y	=\{y_1,y_2,\ldots \}$ and $Z=	\{z_1,z_2,\ldots \}$.
The elements of  
$  F\langle Y\cup Z \rangle$ are polynomials in non-commutative  variables in $ Y\cup Z $ with scalars in $F$.	
We say that $ p=[x_1,\ldots, x_m] $ is a \textit{commutator of variables in} $ Y\cup Z $ if $ x_i \in Y\cup Z $ for all $i=1,\ldots,m$. 
The number of indices $i$ such that $x_i$ lies in $Z$ is said to be  the \textit{degree of $p$ in $Z$}, e.g., the degree of $ [z_1,y_1,z_1,z_2] $ in $Z$ is $3$.  
It is known that each polynomial  in
$ F\langle Y\cup Z \rangle$  can be written as linear combination of
\begin{equation}\label{basis} 
	y_1^{r_1}	\cdots	y_{m}^{r_m}
	z_{1}^{s_1}	\cdots	z_{n}^{s_n}	p_1 \cdots p_l 
\end{equation}
where  
$ r_i, s_i $ are nonnegative integers, $p_i$ are commutators of variables in $Y\cup Z$, $l \geqslant 0$
(we will use the convention that $ p_1 \cdots p_l = 1 $ if $l=0$),
$ m,n\in \mathbb{N} $
and  $\displaystyle\sum_{i=1}^m r_i + \displaystyle\sum_{i=1}^n s_i + l \geqslant 1$.

Let
$\mathcal{F}^{(1)}$  be the vector subespace spanned by all monomials of $ F\langle Y\cup Z \rangle $ in which the variables in $Z$ occur in odd number and let
$\mathcal{F}^{(0)}$ be the vector subespace spanned by the remaining monomials of $ F\langle Y\cup Z \rangle $. It is well-known that the free algebra $ F\langle Y\cup Z \rangle $ is a superalgebra with the grading $ (\mathcal{F}^{(0)} , \mathcal{F}^{(1)}) $.
One immediately see that the subset of $ \mathcal{F}^{(0)} $ consisting by polynomials in which not appear variables in $Z$ is the associative free algebra generated by $Y$, which we denote by $ F\langle Y \rangle $.

We say that  $f=f(y_1,\ldots,y_m, z_1, \ldots, z_n) \in F\langle Y\cup Z \rangle$ is a \textit{polynomial superidentity} for the superalgebra
$\mathcal{A} = \mathcal{A}^{(0)} \oplus \mathcal{A}^{(1)}$ if 
\[f(u_1, \ldots, u_m, v_1,\ldots, v_n)=0\]
for all $u_1, \ldots, u_m \in \mathcal{A}^{(0)}$ and $v_1,\ldots,v_n \in \mathcal{A}^{(1)}$. 
By $ \textrm{Id}^{gr} (\mathcal{A}) $ we denote 
the set of all  polynomial superidentities of  $\mathcal{A} = \mathcal{A}^{(0)} \oplus \mathcal{A}^{(1)}$, which has structure of $ T_2$-ideal, that is,
is an ideal invariant by endomorphisms of the superalgebra  $ F\langle Y\cup Z \rangle $. 
Alternatively, we also use the notation 
$\textrm{Id}(\mathcal{A}^{(0)} , \mathcal{A}^{(1)})$ for $\textrm{Id}^{gr} (\mathcal{A})$, if we want specify the grading.

Let $ I $ be a $ T_2 $-ideal of $ F\langle Y\cup Z \rangle $.
A subset $S$ of $F\langle Y\cup Z \rangle$ is called a \textit{basis} of  $I$ if $S$ generates $I$ as a $T_2$-ideal, we denote by $I = \langle S \rangle _{T_2}$.

The following Lemma is a generalization of identity $ [uv,w] = u[v,w] + [u,w] v $.

\begin{lemma}\label{lemma uv}
	Let $\mathcal{A}$ be an associative algebra over $F$. Then,
	\begin{align*}
		[uv,w^{(r)}] = \sum_{i=0}^{r}		\binom{r}{i}	[u , w^{(i)}]	[ v,w^{(r-i)}]	
	\end{align*}
	for all $u,v,w \in \mathcal{A}$, $r\geqslant 0$.
\end{lemma}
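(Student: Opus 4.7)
The plan is to proceed by induction on $r$, recognizing that the identity is really the generalized Leibniz rule for the inner derivation $\mathrm{ad}_{w}\colon x \mapsto [x,w]$. The base case $r=0$ is immediate since both sides equal $uv$ (only the term $i=0$ survives on the right, and $[u,w^{(0)}][v,w^{(0)}] = uv$).

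For the inductive step, assuming the formula for $r$, I would write
\[
[uv, w^{(r+1)}] = \bigl[[uv, w^{(r)}], w\bigr] = \sum_{i=0}^{r} \binom{r}{i}\,\bigl[[u,w^{(i)}]\,[v,w^{(r-i)}],\, w\bigr],
\]
and then apply the standard Leibniz identity $[ab,w] = a[b,w] + [a,w]b$ (the $r=1$ case, cited in the statement) to each summand. This produces two sums,
\[
\sum_{i=0}^{r} \binom{r}{i}\,[u,w^{(i)}]\,[v,w^{(r-i+1)}] \;+\; \sum_{i=0}^{r} \binom{r}{i}\,[u,w^{(i+1)}]\,[v,w^{(r-i)}].
\]
After reindexing the second sum by $j = i+1$ and relabeling, I would combine coefficients of the common term $[u,w^{(j)}][v,w^{(r+1-j)}]$ using Pascal's identity $\binom{r}{j} + \binom{r}{j-1} = \binom{r+1}{j}$, together with the boundary checks $\binom{r}{0} = \binom{r+1}{0}$ and $\binom{r}{r} = \binom{r+1}{r+1}$, to recover the desired expression $\sum_{j=0}^{r+1} \binom{r+1}{j} [u,w^{(j)}][v,w^{(r+1-j)}]$.

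The computation is entirely mechanical once the Leibniz step and reindexing are set up correctly; the only point that requires some care is making sure the two boundary terms (corresponding to $j=0$ and $j=r+1$) line up with the Pascal identity and are not double-counted. This is a routine bookkeeping matter rather than a genuine obstacle, so no deeper idea beyond induction plus Pascal's rule is needed.
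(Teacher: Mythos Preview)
Your proof is correct and follows the same inductive strategy as the paper. In fact, the paper only writes out the case $r=2$ explicitly and then remarks that the general case is done by induction, whereas you have spelled out the full inductive step with the reindexing and Pascal's identity; so your version is the more detailed of the two.
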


\begin{proof}
	We will  show the case $r=2$. The general case can be done by induction.
	By using the case $r=1$ sometimes, we see that 
	\[[uv,w^{(2)}] = [u[v,w]	, w] + [[u,w] v , w ]	=	u [v,w^{(2)}] + 2[u,w] [ v, w ] + [u,w^{(2)} ] v.	\]	
\end{proof}

\begin{lemma}\label{non z's}
	$z_1 f z_2 \in \langle z_1z_2	\rangle_{T_2}$ for all $f \in F\langle Y\cup Z \rangle$. 
\end{lemma}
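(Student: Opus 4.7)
The plan is to exploit the $T_2$-ideal structure directly: $\langle z_1 z_2 \rangle_{T_2}$ is closed under graded endomorphisms of $F\langle Y \cup Z \rangle$ and under two-sided multiplication by arbitrary elements. The only delicate point is that a graded endomorphism may send $z_i$ only to an element of $\mathcal{F}^{(1)}$, so I cannot in one step replace $z_2$ by $f z_2$ when $f$ has mixed parity. I would therefore begin by writing $f = f_0 + f_1$ using the canonical decomposition of $F\langle Y \cup Z \rangle$ as $\mathcal{F}^{(0)} \oplus \mathcal{F}^{(1)}$, and treat the two summands separately.

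For the even part $z_1 f_0 z_2$, since $z_1 \in \mathcal{F}^{(1)}$ and $f_0 \in \mathcal{F}^{(0)}$, the product $z_1 f_0$ lies in $\mathcal{F}^{(1)}$. Hence the assignment $z_1 \mapsto z_1 f_0$, $z_2 \mapsto z_2$ (and the identity on all other generators) extends to a graded endomorphism of $F\langle Y \cup Z \rangle$. Applying it to $z_1 z_2$ gives $(z_1 f_0) z_2 = z_1 f_0 z_2$, which therefore lies in $\langle z_1 z_2 \rangle_{T_2}$.

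For the odd part $z_1 f_1 z_2$, note that $f_1 \in \mathcal{F}^{(1)}$, so the assignment $z_2 \mapsto f_1$, $z_1 \mapsto z_1$ extends to a graded endomorphism, sending $z_1 z_2$ to $z_1 f_1$. Thus $z_1 f_1 \in \langle z_1 z_2 \rangle_{T_2}$, and since $T_2$-ideals are in particular two-sided ideals of $F\langle Y \cup Z \rangle$, multiplying on the right by $z_2$ keeps us inside the ideal: $z_1 f_1 z_2 \in \langle z_1 z_2 \rangle_{T_2}$.

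Adding the two cases gives $z_1 f z_2 \in \langle z_1 z_2 \rangle_{T_2}$. The only real obstacle is the parity bookkeeping just above, namely recognizing that one cannot substitute a mixed-parity polynomial into a $z$-variable in a single step and must instead split $f$ by homogeneous $\mathbb{Z}_2$-degree; everything else follows from the definition of a $T_2$-ideal.
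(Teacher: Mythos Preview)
Your proof is correct and follows essentially the same approach as the paper: split $f$ into its even and odd homogeneous components, use that $z_1 f^{(0)} \in \mathcal{F}^{(1)}$ to handle the even part via a graded substitution, and use that $z_1 f^{(1)} \in \langle z_1 z_2\rangle_{T_2}$ (followed by right multiplication by $z_2$) for the odd part. The paper states these steps more tersely, while you spell out the relevant graded endomorphisms explicitly.
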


\begin{proof}
	Write $f= f^{(0)} + f^{(1)}$ where $f^{(i)} \in \mathcal{F}^{(i)}$, $i=0,1$. Since, $z_1f^{(0)} \in \mathcal{F}^{(1)}$ we have that 
	$z_1f^{(0)}z_2 \in \langle	z_1z_2 \rangle_{T_2} $. On the other hand
	$z_1 f^{(1)} \in 	\langle	z_1z_2 \rangle_{T_2}$,
	thus, $z_1fz_2	=	z_1	f^{(0)} z_2 +	z_1 f^{(1)}	z_2 \in \langle	z_1z_2 \rangle_{T_2} $.
\end{proof}

\begin{lemma}\cite[Theorem 5.2.1]{drenskybook}\label{lema comutador ordenado}
	Let $p$ be a commutator of variables in $Y \cup Z $. 	
	Then $p = v + v'$ where $v$ is a linear combination of ordered commutators of variables in $Y \cup Z $ and
	$v'$ is a linear combination of products of at least two ordered commutators of variables in $Y \cup Z $. 
\end{lemma}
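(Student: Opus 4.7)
My plan is to prove this by induction on the length $m$ of the commutator $p = [x_{i_1}, \ldots, x_{i_m}]$. I would first fix a total order on $Y \cup Z$, with respect to which an ordered commutator is understood (following Drensky's book) to be one of the form $[x_{j_1}, x_{j_2}, \ldots, x_{j_k}]$ with $x_{j_1} > x_{j_2} \leqslant x_{j_3} \leqslant \cdots \leqslant x_{j_k}$. The base case $m = 2$ is immediate: if $x_{i_1} > x_{i_2}$ the commutator is already ordered, if $x_{i_1} < x_{i_2}$ anticommutativity $[x_{i_1}, x_{i_2}] = -[x_{i_2}, x_{i_1}]$ fixes it, and if $x_{i_1} = x_{i_2}$ it vanishes.

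For the inductive step, write $p = [q, x_{i_m}]$ with $q = [x_{i_1}, \ldots, x_{i_{m-1}}]$ of length $m-1$ and apply the inductive hypothesis to obtain $q = v_1 + v_2$, where $v_1$ is a linear combination of ordered commutators and $v_2$ is a linear combination of products of at least two ordered commutators. Then $p = [v_1, x_{i_m}] + [v_2, x_{i_m}]$. Using the Leibniz rule $[uv, w] = u[v, w] + [u, w]v$ repeatedly on the second summand, $[v_2, x_{i_m}]$ is expressed as a sum of terms of the shape $c_1 \cdots c_{s-1}\, [c_s, x_{i_m}]\, c_{s+1} \cdots c_t$ with each $c_j$ an ordered commutator. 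Hence in both summands the task reduces to handling $[c, x_{i_m}]$ for an ordered commutator $c$ of length at most $m-1$.

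For such a $[c, x_{i_m}]$ with $c = [x_{j_1}, \ldots, x_{j_{m-1}}]$, I would insert $x_{i_m}$ into its correct position in the nondecreasing tail by repeated applications of the Jacobi identity in left-normed form $[\ldots, a, b] = [\ldots, b, a] + [\ldots, [a, b]]$. The principal term of each swap moves $x_{i_m}$ one slot to the left, while the correction term contains a nested bracket $[a, b]$ which, when re-expanded to left-normed form, gives commutators of length $m$ in which two of the original entries are fused into a single Lie element. If the resulting first entry turns out to be smaller than the second, one further application of anticommutativity recovers the ordered form.

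The main obstacle is justifying termination of this rearrangement, since a naive induction on $m$ alone does not close: the Jacobi correction terms have the same length $m$ after re-expansion. The standard remedy is to induct on the lexicographic pair (length of the commutator, number of inversions in the tail with respect to the fixed order). Each Jacobi swap strictly decreases the inversion count in the principal term, while the correction term, once the nested $[a, b]$ is treated as a single atomic Lie element, corresponds to a commutator of strictly smaller length, to which the inductive hypothesis applies directly. This combined well-founded induction, together with the Leibniz reduction above, is the heart of the argument and completes the proof.
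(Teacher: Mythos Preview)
The paper does not prove this lemma; it is stated with a citation to Drensky's book and used as a black box, so there is no proof in the paper to compare your argument against directly.

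Your outline is the standard rewriting proof and is correct in structure. The one step that deserves more care is your handling of the Jacobi correction $[u,[a,b]]$: you propose to treat $[a,b]$ as an atom so that the correction has length $m-1$ and then invoke the length induction, but that hypothesis would return ordered commutators in an \emph{enlarged} alphabet containing the atom, and you stop short of explaining why re-expanding the atom back into two letters yields the claimed decomposition over the original alphabet $Y\cup Z$. In the associative setting of the lemma this gap is actually easier to close than in the purely Lie setting: whenever the prefix $u$ has length $\geq 2$ it is itself an ordered commutator (being a proper prefix of one), and since $a>b$ the bracket $[a,b]$ is ordered too, so $[u,[a,b]] = u\,[a,b]-[a,b]\,u$ is already a difference of products of two ordered commutators and hence lies in $v'$; subsequently bracketing with the remaining tail variables and applying the Leibniz rule keeps every term a product of at least two ordered commutators. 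Only when $u$ degenerates to a single variable is a separate (short) check needed. With this observation your double induction on (length, inversions) closes without the atom device.
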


From now on, we only will consider a finite field $ F $ of $q$ elements.
It is known that  $a^{q-1} =1$ for all $a \in F- \{0\}$. Therefore, $a^{q} =a$ holds for all $a \in F$.	

\begin{lemma}\label{field identities}
	\cite[Lemma 2.1]{ronalddimas}\cite[Proposition 4.2.3]{drenskybook} 
	The monomials
	$
	x_{i_1}^{r_1}\cdots x_{i_m}^{r_m}$, $i_1 <\ldots<i_m$,  $1\leqslant r_1,\ldots, r_m < q	$, $ m\in \mathbb{N}$,
	are linearly independent modulo $Id(F)$, where $Id(F)$ is the $T$-ideal of ordinary polynomial identities of $F$.
\end{lemma}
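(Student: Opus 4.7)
The plan is to reduce to the commutative polynomial ring and then argue by a dimension count. Since $F$ is a field, every commutator $[x,y]$ lies in $Id(F)$, so a polynomial in non-commuting variables is an identity of $F$ if and only if its image in the commutative polynomial ring $F[x_1,x_2,\ldots]$ vanishes on every substitution from $F$. The fact that $a^q=a$ for all $a\in F$ says $x^q-x\in Id(F)$, which lets me reduce any exponent into the range $\{0,1,\ldots,q-1\}$. So every nonconstant monomial is congruent modulo $Id(F)$ to exactly one ``reduced'' monomial of the form listed in the lemma. Any hypothetical linear dependence uses only finitely many variables, so it suffices to prove independence in a fixed but arbitrary number $n$ of variables.

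Fix $n$ and consider the evaluation map $\varphi\colon F[x_1,\ldots,x_n]\to F^{F^n}$ that sends a polynomial to its induced function on $F^n$. Its kernel contains the ideal $J=(x_1^q-x_1,\ldots,x_n^q-x_n)$, and the quotient $F[x_1,\ldots,x_n]/J$ is spanned by the $q^n$ reduced monomials $x_1^{r_1}\cdots x_n^{r_n}$ with $0\leq r_j\leq q-1$. Since $|F^n|=q^n$, the codomain $F^{F^n}$ also has dimension $q^n$ over $F$. If I can show $\varphi$ is surjective, matching dimensions forces $\ker\varphi=J$, so the $q^n$ reduced monomials are linearly independent as functions on $F^n$.

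Surjectivity of $\varphi$ is Lagrange interpolation over the finite field $F$: for any $a=(a_1,\ldots,a_n)\in F^n$, the polynomial $\prod_{j=1}^n\bigl(1-(x_j-a_j)^{q-1}\bigr)$ equals $1$ at $a$ and $0$ at every other point of $F^n$, because $b^{q-1}$ is $1$ for $b\in F^\times$ and $0$ for $b=0$. Thus the characteristic functions of all points of $F^n$ lie in the image of $\varphi$, so $\varphi$ is surjective. Removing the single constant monomial (excluded by the condition $m\geq 1$) from the basis of $q^n$ reduced monomials leaves precisely the family listed in the statement, still linearly independent.

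The only technical point needing care is verifying the Lagrange interpolation identity; the rest is the reduction to commutative variables and a dimension count, and no step looks like a real obstacle.
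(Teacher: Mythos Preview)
Your proof is correct. The paper itself does not prove this lemma but simply cites it from the two references indicated in the statement, so there is no in-paper argument to compare against; your route via the dimension count on $F[x_1,\ldots,x_n]/(x_1^q-x_1,\ldots,x_n^q-x_n)$ together with the explicit Lagrange interpolation polynomials $\prod_{j}\bigl(1-(x_j-a_j)^{q-1}\bigr)$ is a standard and complete way to establish the result.
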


Consider the following order on the variables $Y \cup Z$: 
\[ y_1 < y_2 < \ldots < y_n < \ldots <z_1 < z_2 < \ldots < z_m < \ldots .\]
We say that a commutator $ [x_{i_1},\ldots,x_{i_m}] $  of variables in $Y\cup Z$ is \textit{ordered} if $x_{i_1} > x_{i_2} \leqslant x_{i_3} \leqslant \ldots \leqslant x_{i_m}$.
For example, the multilinear ordered commutators in the variables $z_1, y_1,y_3,y_4$ are 
$ [z_1,  y_1,y_3,y_4 ], [y_3,  y_1,y_4,z_1 ],  [y_4,  y_1,y_3,z_1 ] $. 

We say that a polynomial $p$ in variables in $Y$ is an \emph{ordered $q$-commutator} if  either  
$  p=	[y_{j_1},y_{j_2}^{(s_2)},y_{j_1}^{(s_1-1)}, y_{j_3}^{(s_3)} , \ldots, y_{j_{n}}^{(s_{n})}] $ 
where the indices $j_i$ are distinct with
$j_1> j_2 < \ldots < j_{n}$,  
$1 \leqslant s_1,  \ldots, s_{n} < q$, $n \geqslant 2$, or
$$p =	
[y_{l_1}^q - y_{l_1}, y_{l_2}^{(t_2)} ,  \ldots , y_{l_{k}}^{(t_{k})}]$$
where the indices $l_i$ are distinct with
$ l_2 < \ldots < l_{k}$,  
$0 \leqslant t_2,  \ldots, t_{k} < q$, $k \in \mathbb{N}$.

\begin{lemma}\cite[Lemma 2.5]{siderov}\label{siderov}
	Let $\mathcal{A}$ be an algebra associative over 
	$F$. 
	Then,
	$[u, v ^{(q)}] = [u, v ^ q]$
	for all $u,v \in \mathcal{A}$.
\end{lemma}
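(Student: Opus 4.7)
The plan is to reinterpret the iterated commutator $[u, v^{(r)}]$ as the action of a single operator on $u$. Let $L_v, R_v : \mathcal{A} \to \mathcal{A}$ denote left and right multiplication by $v$; these operators commute, since $L_v R_v(w) = vwv = R_v L_v(w)$ for every $w \in \mathcal{A}$. Using that $[w, v] = wv - vw = (R_v - L_v)(w)$, an immediate induction on $r$ gives
\[ [u, v^{(r)}] = (R_v - L_v)^r(u). \]

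Taking $r = q$, the lemma becomes the operator identity $(R_v - L_v)^q(u) = (R_v^q - L_v^q)(u)$. Since $R_v$ and $L_v$ commute, they generate a commutative subalgebra of $\mathrm{End}_F(\mathcal{A})$, and on any commutative $F$-algebra with $\mathrm{char}(F) = p$ the Frobenius map $T \mapsto T^p$ is a ring homomorphism. Hence $(R_v - L_v)^p = R_v^p - L_v^p$, and iterating $n$ times (with $q = p^n$) yields $(R_v - L_v)^q = R_v^q - L_v^q$. Equivalently, one can expand by the binomial theorem and invoke the classical fact that $\binom{p^n}{i} \equiv 0 \pmod p$ for $0 < i < p^n$ to collapse the middle terms.

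Evaluating both sides at $u$ then gives $[u, v^{(q)}] = uv^q - v^q u = [u, v^q]$, which is the claim. The only potentially delicate point is keeping track of signs when $p = 2$ (where $(-L_v)^{p^n} = L_v^{p^n}$, not $-L_v^{p^n}$), but in characteristic $2$ the signs $+$ and $-$ agree, so the identity $R_v^q - L_v^q = R_v^q + L_v^q$ still gives the right-hand side $[u, v^q]$. Everything else is purely formal manipulation of commuting operators.
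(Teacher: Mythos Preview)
Your proof is correct. Both your argument and the paper's hinge on the vanishing of $\binom{q}{i}$ modulo $\mathrm{char}\,F$ for $0 < i < q$, but the packaging differs. The paper first records the identity
\[
uv^r = \sum_{i=0}^{r} \binom{r}{i}\, v^{i}\, [u,v^{(r-i)}]
\]
and then sets $r=q$ so that only the terms $i=0$ and $i=q$ survive, giving $uv^q = [u,v^{(q)}] + v^q u$. You instead pass to the commutative subalgebra of $\mathrm{End}_F(\mathcal{A})$ generated by $L_v$ and $R_v$, identify $[u,v^{(r)}]$ with $(R_v-L_v)^r(u)$, and apply Frobenius directly to obtain $(R_v-L_v)^q = R_v^q - L_v^q$. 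The two identities are really the same binomial expansion viewed from opposite sides: the paper expands $R_v^r = (L_v + (R_v-L_v))^r$, while you expand $(R_v-L_v)^r$. Your route is a bit more streamlined, since invoking Frobenius on commuting operators dispenses with the auxiliary combinatorial identity. The closing remark about $p=2$ is harmless but unnecessary: in characteristic $2$ one has $-1=1$, so no sign bookkeeping is needed.
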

\begin{proof}
	One can verify that 
	\[uv^r = \sum_{i=0}^{r}		\binom{r}{i}	v^{i}	[u,v^{(r-i)}],	\]
	for all $u,v \in \mathcal{A}$.
	Taking $r=q$ and using the fact that 
	$ \binom{q}{m} = 0 \mod{(\textrm{char} \, F)} $, for all  $0 < m < q$, we obtain the assertion of the Lemma.
\end{proof}

Throughout this paper we must mentioned sometimes, the following sets of $ F \langle Y \rangle $:  
$ \Omega_1=\{ [y_1 , y_2], y_2^q - y_2 \}$, $ \Omega_2=\{ [y_3 , y_4], y_4^q - y_4 \}$. We will use the  short notation $ \Omega_i=\{ [y_{2i-1} , y_{2i}], y_{2i}^q - y_{2i} \} $, $ i=1,2 $.

\begin{lemma}\label{triangular basis}\cite{siderov}
	A basis for the ordinary polynomial  identities of $UT_2$ is given by $ w_1w_2 $ where $ w_i \in \Omega_i  $. Moreover,
	a linear basis for the vector space 
	$F\langle Y \rangle / \langle w_1w_2 \mid w_i \in \Omega_i \rangle_{T}$  consist by the polynomials of type
	$$y_{1}^{r_1} \ldots y_{m}^{r_m} p^{\theta} \mod \langle w_1w_2 \mid w_i \in \Omega_i \rangle_T,$$
	where  
	$0 \leqslant r_1, \ldots,r_m < q$,
	$p$ is an ordered $q$-commutator,
	$\theta =0, 1$, $\displaystyle \sum_{i=1}^m r_i + \theta \geqslant 1 $.
	
	Let $L(Y)$ be the free Lie algebra generated by $Y$. Then, a basis for the vector space $L(Y) / \langle [w_1 , w_2] \mid w_i \in \Omega_i \rangle_{L_T}	$ is given by the set of all $q$-ordered commutators module $\langle [w_1 , w_2] \mid w_i \in \Omega_i \rangle_{L_T}$. 
\end{lemma}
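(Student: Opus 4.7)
The plan is to prove simultaneously that $I := \langle w_1 w_2 \mid w_i \in \Omega_i \rangle_T$ equals $\mathrm{Id}(UT_2)$ and that the displayed set $B$ is a basis of $F\langle Y\rangle/I$. I verify three claims: $(i)$ $I \subseteq \mathrm{Id}(UT_2)$, $(ii)$ $B$ spans $F\langle Y\rangle/I$, $(iii)$ $B$ is linearly independent in $F\langle Y\rangle/\mathrm{Id}(UT_2)$. Step $(i)$ is immediate: every $w \in \Omega_i$ evaluates in the Jacobson radical $J$ of $UT_2$ (the strictly upper triangular matrices), since the diagonal part of any commutator vanishes and $\alpha^q = \alpha$ for $\alpha \in F$; and $J^2 = 0$, so $w_1 w_2$ vanishes on $UT_2$. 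From $(ii)$ and $(iii)$ one gets $\dim F\langle Y\rangle/I = |B| \leqslant \dim F\langle Y\rangle/\mathrm{Id}(UT_2)$, and $(i)$ gives the reverse inequality; hence $I = \mathrm{Id}(UT_2)$ and $B$ is a basis of both quotients.

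For step $(ii)$ I start from the canonical form~(\ref{basis}), writing each polynomial as a sum of products $y_1^{r_1}\cdots y_m^{r_m} p_1\cdots p_l$. Because $[y_1,y_2][y_3,y_4]\in I$, $T$-invariance implies every product of two commutators of polynomials lies in $I$, so I may assume $l \leqslant 1$. If $l = 1$, Lemma~\ref{lema comutador ordenado} reduces $p_1$ to a sum of ordered commutators modulo products of at least two commutators, which are absorbed by $I$. To bring an ordered commutator to $q$-ordered form, I apply Lemma~\ref{siderov} to any inner factor $v^{(q)}$, writing it as $v^q = v + (v^q - v)$: the $v$-piece lowers the inner exponent, while the $v^q - v$ piece can be moved to the leftmost slot using the Jacobi identity (modulo commutator products), yielding the second type of $q$-ordered commutator. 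Exponents $r_i \geqslant q$ in the monomial factor are reduced by the same splitting $y_i^{r_i} = y_i^{r_i-q+1} + y_i^{r_i-q}(y_i^q - y_i)$: the first summand drops the exponent, while in the second summand the factor $y_i^q - y_i$ is pushed past the remaining variables via Lemma~\ref{lemma uv}, which creates only $q$-ordered commutators of the second type.

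For step $(iii)$, suppose $\sum \alpha_{r,\theta,p}\, y_1^{r_1}\cdots y_m^{r_m} p^\theta \equiv 0$ on $UT_2$. Evaluating with all variables diagonal kills every commutator, so only the $\theta=0$ part contributes, giving a polynomial identity of the commutative algebra $F \oplus F$. Lemma~\ref{field identities} then forces $\alpha_{r,0} = 0$ for every exponent tuple $r$. To annihilate the $\theta = 1$ terms, substitute the outer variable of $p$ by a strictly upper triangular matrix and the rest by diagonals; the $(1,2)$-entry of the evaluation becomes an identity of $F$ in the remaining diagonal parameters, again controlled by Lemma~\ref{field identities}. Separation between distinct $q$-ordered commutators is obtained by varying which variable is chosen non-diagonal and by reading off explicit scalar coefficients. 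The Lie algebra statement follows by restricting the reduction in step $(ii)$ to a single commutator and applying the same independence argument on $UT_2$ viewed as a Lie algebra.

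The main obstacle is the intertwined reduction of the monomial exponents and the commutator part: every time the splitting $y_i^{r_i} = y_i^{r_i-q+1} + y_i^{r_i-q}(y_i^q - y_i)$ is used, one has to commute $y_i^q - y_i$ across the remaining variables, and the resulting nested commutators must be brought to $q$-ordered form without regenerating high-power factors. Termination is secured by a double induction on total degree and on the number of indices with $r_i \geqslant q$, but orchestrating the normalization so that only admissible $q$-ordered commutators survive is the delicate step.
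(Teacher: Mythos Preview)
The paper does not supply its own proof of this lemma: it is quoted verbatim from \cite{siderov} and used as a black box (notably in the proof of Theorem~\ref{linear independence of A}, where the case $f_0\in\mathrm{Id}(UT_2)$ is dispatched by invoking this lemma). So there is no in-paper argument to compare against.

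Your outline is the right shape and mirrors Siderov's original strategy, as well as the paper's later $UT_3$ arguments. Step~$(i)$ is fine. Step~$(ii)$ is plausible, though the reduction of the monomial prefix you describe is more delicate than you indicate: after pushing $y_i^q-y_i$ to the right via Lemma~\ref{lemma uv} you also create cross-terms of the form (monomial)$\cdot$($q$-commutator)$\cdot$($q$-commutator), which you must kill with $w_1w_2\in I$; you should say so explicitly. The genuine gap is in step~$(iii)$. The sentence ``substitute the outer variable of $p$ by a strictly upper triangular matrix and the rest by diagonals'' does not by itself separate the two families of ordered $q$-commutators, nor different leading indices within a family. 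In the linear combination you are testing, the leftmost slot of $p$ varies from term to term, so a single choice of ``which variable is non-diagonal'' will not isolate one term. What actually works (and what the paper does in the analogous $UT_3$ computation around equation~(\ref{equation q commutator})) is: fix a maximal leading index $j_1^*$, set only $d_{j_1^*}\neq 0$ and all other $d_i=0$, and use the identity $A^q-A=d\big((c-b)^{q-1}-1\big)e_{12}$ together with a suitable choice like $d_{j_1^*}=c_{j_1^*}$ to make all type-2 contributions with $l_1=j_1^*$ vanish while the type-1 contributions with $j_1=j_1^*$ survive; then a separate substitution with $c_{l_1^*}=0$, $d_{l_1^*}=-1$ handles the type-2 terms. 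Without spelling out this two-stage extremal-index argument, your step~$(iii)$ is only a heuristic.
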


\section{Superidentities of $UT_2$}

\hspace{0.4cm} 
We will denote by $ e_{ij} $,  $1\leqslant i\leqslant j \leqslant 2$,  {the elementary matrices} of the $UT_2$.  
Consider $UT_2^{(0)} = \textrm{span}_F \{e_{11},e_{22}\}$ and 
$UT_2^{(1)} = \textrm{span}_F \{e_{12}\}$.
The algebra $UT_2$ with the grading $(UT_2^{(0)},UT_2^{(1)})$ is a superalgebra. Such grading is called canonical grading of $UT_2$.

\begin{propo}\label{propo super ident}
	The following are polynomial superidentities for $UT_2$:
	\begin{equation}\label{superidentities}
		z_1z_2, \qquad [y_1,y_2], \qquad y_1^q-y_1.
	\end{equation}
\end{propo}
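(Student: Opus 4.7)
The plan is to verify each of the three polynomials directly by substituting arbitrary elements of the appropriate homogeneous component of $UT_2$ and checking that the result vanishes. Since $UT_2^{(0)}$ and $UT_2^{(1)}$ are one- or two-dimensional subspaces with a very concrete description, each verification reduces to a short computation on the elementary matrices $e_{11}, e_{22}, e_{12}$.

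For $z_1z_2$, I would take arbitrary $v_1, v_2 \in UT_2^{(1)}$. Since $UT_2^{(1)} = \mathrm{span}_F\{e_{12}\}$, we can write $v_i = \alpha_i e_{12}$ with $\alpha_i \in F$, and then $v_1 v_2 = \alpha_1 \alpha_2 \, e_{12}^2 = 0$, because $e_{12}^2 = 0$ in $UT_2$.

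For $[y_1, y_2]$, I would take arbitrary $u_1, u_2 \in UT_2^{(0)} = \mathrm{span}_F\{e_{11}, e_{22}\}$. These are diagonal matrices, and since $e_{11} e_{22} = e_{22} e_{11} = 0$ and each $e_{ii}$ commutes with itself, any two diagonal matrices commute. Hence $[u_1, u_2] = u_1 u_2 - u_2 u_1 = 0$.

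For $y_1^q - y_1$, I would take $u = \alpha e_{11} + \beta e_{22} \in UT_2^{(0)}$ with $\alpha, \beta \in F$. Since $e_{11}$ and $e_{22}$ are orthogonal idempotents, $u^q = \alpha^q e_{11} + \beta^q e_{22}$. Using the fact recalled just before Lemma \ref{field identities} that $a^q = a$ for every $a \in F$, we conclude $u^q = \alpha e_{11} + \beta e_{22} = u$, so $y_1^q - y_1$ vanishes on all elements of $UT_2^{(0)}$. None of these steps presents a real obstacle; the proof is essentially a direct check using the defining relations of the elementary matrices together with the identity $a^q = a$ on the finite field $F$.
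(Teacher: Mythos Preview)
Your proof is correct and follows the same direct verification approach as the paper; the paper only writes out the check for $y_1^q - y_1$ explicitly (essentially your third computation), leaving $z_1z_2$ and $[y_1,y_2]$ as obvious, whereas you spell out all three.
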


\begin{proof}
	We prove only that $y_1^q-y_1 \in \textrm{Id}^{gr}(UT_2)$.
	Let $A=a e_{11}+ b e_{22}$ be an element of $UT_2^{(0)}$. Since $|F|=q$, we have that $A^q=a^q e_{11}+ b^q e_{22}= A$.
\end{proof}

Let $I = \langle z_1z_2, [y_1,y_2], y_1^q-y_1	\rangle_{T_2}$.

\begin{propo}\label{generators}
	The vector space $F\langle Y \cup Z \rangle / I$ is generated module $I$ by   polynomials of the type
	\begin{align}\label{equation generators}
		y_{1}^{r_1}	\cdots	y_{m}^{r_m}	[z_j, y_{j_1}^{(s_1)}, \ldots,	y_{j_n}^{(s_n)}]^{\theta}
	\end{align}
	where  $j_1 < \ldots < j_{n}$,  $0 \leqslant r_1, \ldots, r_m ,
	s_1,\ldots, s_n < q$, $ \theta \in \{0,1\} $, $ \displaystyle\sum_{i=1}^m r_i + \theta \geqslant 1 $, $ m,n,j \in \mathbb{N}$.
\end{propo}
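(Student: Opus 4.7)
My plan is to take the spanning set (\ref{basis}) of $F\langle Y\cup Z\rangle$, the monomials $y_1^{r_1}\cdots y_m^{r_m}z_1^{s_1}\cdots z_n^{s_n}p_1\cdots p_l$, and reduce each one modulo $I=\langle z_1z_2,[y_1,y_2],y_1^q-y_1\rangle_{T_2}$ to the form (\ref{equation generators}) using the three generators of $I$ in turn. The first step exploits $z_1z_2$: by the $T_2$-ideal property $z_iz_j\equiv 0\pmod I$ for every pair of indices, and Lemma \ref{non z's} extends this by placing $z_a F z_b$ in $I$ for any polynomial $F$, hence forcing the whole product into $I$ whenever two variables of $Z$ occur anywhere in it, inside a single $p_k$, split between two of them, or between the $Z$-block and a commutator. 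Consequently, modulo $I$ each spanning monomial retains at most one variable of $Z$, which appears either as an isolated factor $z_j$ (with every $p_k$ a pure $Y$-commutator) or inside exactly one commutator $p_{i_0}$ (with every other $p_k$ pure in $Y$).

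For the second step I use $[y_1,y_2]$: since $I$ is a $T_2$-ideal, $[y_i,y_j]\in I$ for every $i,j$, so every commutator of length $\geqslant 2$ in the variables $Y$ alone has its innermost bracket in $I$ and therefore itself lies in $I$. This kills every pure $Y$-commutator in the product, leaving one of $y_1^{r_1}\cdots y_m^{r_m}$, $y_1^{r_1}\cdots y_m^{r_m}z_j$, or $y_1^{r_1}\cdots y_m^{r_m}p$ where $p$ is a single commutator containing exactly one occurrence of $z_j$. Applying Lemma \ref{lema comutador ordenado} expresses $p$ as a linear combination of ordered commutators plus products of at least two ordered commutators; since the $z_j$-degree is preserved and equals $1$, each such product must have one factor with no $z$, a pure $Y$-commutator, which lies in $I$, so the product terms vanish modulo $I$. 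Because $z_j$ exceeds every $y_k$ in the chosen order, an ordered commutator containing $z_j$ must place it either in the leading position or at the very end of its non-decreasing tail; the second possibility forces the bracket to have length $\geqslant 3$ and to begin with $[y_{i_1},y_{i_2}]$, hence to lie in $I$. Only commutators $[z_j,y_{j_1},\ldots,y_{j_n}]$ with $j_1\leqslant\cdots\leqslant j_n$ survive, which after grouping equal consecutive indices take exactly the form $[z_j,y_{j_1}^{(s_1)},\ldots,y_{j_n}^{(s_n)}]$ with $j_1<\cdots<j_n$.

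The third step uses $y_1^q-y_1$, whence $y_i^q\equiv y_i\pmod I$, to bound every outer exponent $r_i$ in $\{0,\ldots,q-1\}$. For the inner exponents $s_i$ inside the surviving commutator, Lemma \ref{siderov} gives $[u,y^{(q)}]=[u,y^q]\equiv[u,y]=[u,y^{(1)}]\pmod I$, so any $s_i\geqslant q$ can be decreased by $q-1$ in place and, iterating, brought into $\{0,\ldots,q-1\}$. Combining the three stages sends each spanning monomial of (\ref{basis}) to a polynomial of the form (\ref{equation generators}), establishing the claim. The step I expect to be the main obstacle is the second one, specifically combining Lemma \ref{lema comutador ordenado} with the chosen order on $Y\cup Z$ to rule out every ordered commutator in which $z_j$ does not occupy the leading position.
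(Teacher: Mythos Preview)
Your proof is correct and follows essentially the same reduction as the paper: cut down to at most one $z$-variable, kill all pure $Y$-commutators, normalize the surviving commutator, and bound the exponents via $y^q\equiv y$ and Lemma~\ref{siderov}. The only methodological difference is that where you invoke Lemma~\ref{lema comutador ordenado} and then analyze the possible positions of $z_j$ in an ordered commutator, the paper argues directly that if $z_j$ sits beyond position~$2$ the innermost bracket is a $Y$-commutator (hence in $I$), and then uses the Jacobi identity $[f,y_l,y_k]=[f,y_k,y_l]+[y_k,y_l,f]\equiv[f,y_k,y_l]\pmod I$ to sort the $y$-variables; both routes reach $[z_j,y_{j_1}^{(s_1)},\ldots,y_{j_n}^{(s_n)}]$ with $j_1<\cdots<j_n$.
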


\begin{proof} 
	We claim that $p_1p_2 \in I$ for all $p_1,p_2$ commutators of variables in $Y \cup Z$.
	To verify this, observe that $[z_1,z_2] \in I$ and
	$[y_1,y_2] \in I,$ {thus} we have that $I$ contains all the commutators in $\mathcal{F}^{(0)}$. 
	On the other hand,   
	$[y_1,z_1] [y_2,z_2] \in \langle	z_1z_2 \rangle_{T_2} \subseteq I$ because $[y_1,z_1]$ and $ [y_2,z_2]$ are in $\mathcal{F}^{(1)}$. 
	Therefore, 
	by (\ref{basis}),
	the vector space		$F\langle Y \cup Z \rangle / I$ is generated by elements of type  
	\begin{equation*} 
		y_{1}^{r_1}	\cdots	y_{m}^{r_m},	\qquad
		y_{1}^{r_1}	\cdots	y_{m}^{r_m}z_j,	\qquad
		y_{1}^{r_1}	\cdots	y_{m}^{r_m}	p,
	\end{equation*}
	where $p$ is an commutator of variables in $Y \cup Z$ such that $p\in \mathcal{F}^{(1)}$. Moreover, since $y_1^q  = y_1\mod I$, we can suppose that the powers $r_i$ are less that $q$.

	Now, since $z_1z_2 \in I$, remains consider the case when the degree of $p$  in $Z$ is $ 1 $.
	As $[y_1,y_2]\in I$ 
	we have that all commutators of variables in $Y$ vanished in  $F\langle Y \cup Z \rangle / I$, then we can suppose that $p  = \pm [z_j,y_{i_1}	,	\ldots, y_{i_k}	] \mod I$ for some $j$ and indices $i_t$.
	Moreover,
	we have commutativity of variables in $Y$ inside of $p$, which is possible  by  
	Jacobi's identity:
	\[ [f,y_{l},y_k] = [f,y_{k},y_l] + [y_{k},y_l, f] = [f,y_{k},y_l] \mod I, \]
	for all
	$ f \in F\langle Y \cup Z \rangle$.
	Thus, we can suppose that 
	$p =  [z_j,y_{j_1}^{(s_1)}	,	\ldots, y_{j_n}^{(s_n)}	]	\mod	I$, where $j_1< \ldots < j_n$ and $s_i$ are nonnegative integers.
	By Lemma 
	\ref{siderov} we know that $[z_1,y_1^{(q)}]  = [z_1,y_1^q]$ and since $y_1^q-y_1 \in I$ we have that $[z_1,y_1^{(q)}]  = [z_1,y_1] \mod I$. Therefore, we can suppose that  $s_i <q$ for all $i=1,\ldots , n$. 
\end{proof}

\begin{theorem}\label{linear independence}
	Let $F$ be a finite field of $q$ elements.
	A basis for the superidentities of $UT_2$ with the canonical grading is given by set (\ref{superidentities}). Moreover, a linear basis for the vector space $F\langle Y\cup Z \rangle / \emph{Id}^{gr}(UT_2)$ consist {of} the set (\ref{equation generators}).
\end{theorem}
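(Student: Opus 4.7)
The plan is to combine the spanning result of Proposition \ref{generators} with a linear independence argument. By Proposition \ref{propo super ident} we have $I \subseteq \textrm{Id}^{gr}(UT_2)$, so the natural surjection $F\langle Y\cup Z\rangle/I \twoheadrightarrow F\langle Y\cup Z\rangle/\textrm{Id}^{gr}(UT_2)$ sends the spanning set (\ref{equation generators}) onto a spanning set of the target. If one can show that these images are linearly independent modulo $\textrm{Id}^{gr}(UT_2)$, then the set is in fact a basis and the surjection is an isomorphism, forcing $I = \textrm{Id}^{gr}(UT_2)$ and proving both assertions simultaneously.

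The linear independence is proved by evaluating on $UT_2$ through the generic graded substitution $y_i \mapsto a_i e_{11} + b_i e_{22}$, $z_j \mapsto c_j e_{12}$, with independent parameters $a_i,b_i,c_j \in F$. A direct computation shows
\[y_1^{r_1}\cdots y_m^{r_m} \longmapsto a_1^{r_1}\cdots a_m^{r_m}\,e_{11} + b_1^{r_1}\cdots b_m^{r_m}\,e_{22},\]
while iteration of the base identity $[c e_{12},\alpha e_{11}+\beta e_{22}] = (\beta-\alpha)c\,e_{12}$ yields
\[[z_j, y_{j_1}^{(s_1)},\ldots, y_{j_n}^{(s_n)}] \longmapsto c_j \prod_{k=1}^n (b_{j_k}-a_{j_k})^{s_k}\,e_{12}.\]
Hence a polynomial of (\ref{equation generators}) with $\theta=1$ is sent to $a_1^{r_1}\cdots a_m^{r_m}\,c_j\prod_k(b_{j_k}-a_{j_k})^{s_k}\,e_{12}$, supported purely on $UT_2^{(1)}$, whereas those with $\theta=0$ are supported on the diagonal. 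An $\textrm{Id}^{gr}(UT_2)$-relation thus decouples into two scalar conditions, one on the diagonal component and one on $e_{12}$, each required to hold for all values of the $a_i,b_i,c_j$.

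After the invertible change of variable $d_i := b_i - a_i$ (so $a_i, d_i$ are algebraically independent in the resulting polynomial functions on $F^N$), the two conditions become linear relations among monomials $a_1^{r_1}\cdots a_m^{r_m}$ (and their $b$-analogues) and $a_1^{r_1}\cdots a_m^{r_m}\,c_j\,d_{j_1}^{s_1}\cdots d_{j_n}^{s_n}$. Since every exponent lies in $\{0,\ldots,q-1\}$, the multivariable form of Lemma \ref{field identities} forces all coefficients to vanish. The main bookkeeping obstacle is to verify that distinct elements of (\ref{equation generators}) really do produce distinct monomials under this evaluation; the delicate case is when an index $j_k$ inside the commutator coincides with one of the prefix indices $1,\ldots,m$, but since $a_i$ and $d_i$ remain algebraically independent after the change of variable, the resulting mixed monomial $a_i^{r_i}d_i^{s_k}$ still uniquely recovers the original data, and the argument closes.
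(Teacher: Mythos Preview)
Your proposal is correct and follows essentially the same approach as the paper: split off the $\theta=0$ and $\theta=1$ parts, then kill each by a generic graded evaluation and Lemma~\ref{field identities}. The only cosmetic differences are that the paper's substitution $A_i = a_i e_{11} + (a_i+b_i)e_{22}$ already builds your change of variable $d_i=b_i-a_i$ into the evaluation, and the paper reduces to a single $z$-variable by zeroing the others whereas you track all $z_j$ simultaneously via the parameter $c_j$; neither change affects the argument.
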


\begin{proof}
	We begin by first showing that the set (\ref{equation generators}) above is linearly independent module $\textrm{Id}^{gr}(UT_2)$. Let $f = g + h \in \textrm{Id}^{gr}(UT_2)$ where  
	$g$ and $h$ be a linear combination of polynomial of type (\ref{equation generators})  with $ \theta =0 $ and $ \theta = 1, $ respectively. 
	Since neither of the variables {in} $Z$ occur in $g,$ we can write $ f(y_1,\ldots, y_m,z_1,\ldots,z_n) = g(y_1,\ldots, y_m) + h(y_1,\ldots, y_m,z_1,\ldots,z_n)$ for some $ m,n\in \mathbb{N} $. Making  $z_j=0$  
	we have that $h(y_1,\ldots, y_m,0,\ldots,0) = 0$. Therefore,  $g \in Id^{gr}(UT_2)$ and then $h\in Id^{gr}(UT_2)$ too.

	Let $a_1, \ldots, a_m $ be arbitrary elements in $F$.
	If we evaluate $y_i$ {by} $a_ie_{11} \in UT_2^{(0)}$, we obtain $g(a_1e_{11},\ldots,a_m e_{11}) = g(a_1,\ldots,a_m)e_{11}$. Then, since 
	$g \in \textrm{Id}^{gr}(UT_2) $
	we have that $g$ is an ordinary polynomial	identity for the field $F$. By Lemma \ref{field identities} we have that $g=0$. 
	Now,	we can assume that  
	\[h(y_1,\ldots,y_m, z_1 )=\sum_{0\leq s_i,r_j <q} \alpha_{\mathbf{s},\mathbf{r}} y_1^{r_1}	\cdots y_m^{r_m}
	[z_1, y_1^{(s_1)},	\ldots, y_m^{(s_m)} ]\]
	where
	$\alpha_{\mathbf{s},\mathbf{r}} \in F$ and $\mathbf{s}= \{s_i\}_{i=1}^m$, $\mathbf{r}= \{r_i\}_{i=1}^m$ are $m$-uples.
	Define $A_i= a_i e_{11} + (a_i+b_i) e_{22} \in UT_2^{(0)}$ where  $ a_i,b_i \in F, i=1,\ldots m$. 
	Observe that $[e_{12},A_i] = b_i e_{12}$ and so $[e_{12},A_i,A_j]  = b_i b_j e_{12}$. In general,
	$[e_{12}, A_1^{(s_1)},	\ldots, A_m^{(s_m)} ] = b_1^{s_1}\cdots b_m^{s_m}	e_{12}$. 
	Thus, if we evaluate $y_i$ by
	$A_i $ and $z_1$ by $ e_{12} $ we obtain that
	\begin{equation*}
		h(A_1,\ldots,A_m, e_{12} )= \sum_{0\leqslant s_i,r_j <q} \alpha_{\mathbf{s},\mathbf{r}} a_1^{r_1}\cdots	a_m^{r_m}	b_1^{s_1}	\cdots b_m^{s_m}	e_{12}	=	0.
	\end{equation*}
	Since $ a_i,b_i $ are arbitrary, we have that all coefficients $\alpha_{\mathbf{s,r}}$ are zero, by Lemma \ref{field identities}. Thus, $h=0$.

	By Proposition \ref{propo super ident} we have that $I\subseteq \textrm{Id}^{gr}(UT_2)$. 
	For the converse inclusion,
	let
	$f\in \textrm{Id}^{gr}(UT_2)$. By Proposition \ref{generators} there exists  $f'$ a linear combination of (\ref{equation generators}) 
	such that $f=f' \mod I$. 
	Hence, we have that $ f = f' \mod \textrm{Id}^{gr}(UT_2) $	since $I\subseteq \textrm{Id}^{gr}(UT_2)$. Then, by the previous paragraph $f'=0$ and so $f \in I$.
\end{proof}

\section*{Superidentities for $UT_3$}

\hspace{0.4cm} 
In the same manner as the previous section,
we will denote by $ e_{ij} ,	1\leqslant i\leqslant j \leqslant 3$, {the elementary matrices} of $UT_3$.
In the following two sections we will obtain basis of superidentities for $UT_3$ 
with the following nontrivial elementary gradings 
$	(\mathcal{A}^{(0)}, \mathcal{A}^{(1)})$ and $(\mathcal{B}^{(0)},\mathcal{B}^{(1)})$, where
\begin{align*}
	&\mathcal{A}^{(0)} = \textrm{span}_F \{ e_{11},e_{22} , e_{33}, e_{23} \}  & &\mathcal{B}^{(0)} = \textrm{span}_F\{ e_{11},e_{22} , e_{33}, e_{13} \}\\
	&\mathcal{A}^{(1)} = \textrm{span}_F\{ e_{12} , e_{13}\}   & &\mathcal{B}^{(1)} = \textrm{span}_F\{ e_{12},e_{23} \}.
\end{align*}

\section{Superidentities of $UT_3$ with the grading $(\mathcal{A}^{(0)},\mathcal{A}^{(1)})$}

\begin{propo}\label{propo superidentities A}
	The following are polynomial superidentities for $UT_3$ with the grading $ (\mathcal{A}^{(0)} , \mathcal{A}^{(1)})$:
	\begin{align}\label{superidentities of A}
		z_1 z_2, \qquad w_1 z_1, \qquad w_1 w_2,
	\end{align}
	where
	$w_i \in \Omega_i $, $ i=1,2 $.
\end{propo}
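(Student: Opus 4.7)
The plan is to verify each of the three listed polynomials directly on $UT_3$ by exploiting the following structural observation: let $J := F e_{23} \subseteq \mathcal{A}^{(0)}$. Then $J$ is an ideal of $\mathcal{A}^{(0)}$ with $J^2 = 0$, and moreover $J \cdot \mathcal{A}^{(1)} = 0$, because $e_{23} e_{12} = e_{23} e_{13} = e_{23} e_{23} = 0$. So if I can show that every $w \in \Omega_i$, when evaluated on any elements of $\mathcal{A}^{(0)}$, produces an element of $J$, then both $w_1 z_1$ and $w_1 w_2$ vanish automatically.

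I would first dispose of $z_1 z_2$. Writing arbitrary $z_i = \alpha_i e_{12} + \beta_i e_{13} \in \mathcal{A}^{(1)}$ and expanding, every pairwise product $e_{1j} e_{1k}$ with $j,k\in\{2,3\}$ is zero, so $z_1 z_2 = 0$.

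Next I would prove the key claim: for every $Y, Y' \in \mathcal{A}^{(0)}$, both $[Y,Y']$ and $Y^q - Y$ lie in $Fe_{23}$. For the commutator, write $Y = a e_{11} + b e_{22} + c e_{33} + d e_{23}$ and similarly for $Y'$; since $e_{11}, e_{22}, e_{33}$ are pairwise commuting idempotents and $e_{11}$ annihilates $e_{23}$ on both sides, a direct expansion shows $[Y,Y'] = \bigl((b-c)d' - (b'-c')d\bigr) e_{23} \in Fe_{23}$. For $Y^q - Y$, decompose $Y = a e_{11} + U$ with $U = b e_{22} + c e_{33} + d e_{23}$; since $e_{11}$ commutes with $U$ and $e_{11} U = U e_{11} = 0$, we get $Y^q = a^q e_{11} + U^q = a e_{11} + U^q$. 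An easy induction gives
\[
U^n = b^n e_{22} + c^n e_{33} + d\,(b^{n-1} + b^{n-2} c + \cdots + c^{n-1})\,e_{23},
\]
so applying $b^q = b$, $c^q = c$, the term $U^q - U$ equals $d\,(S_q - 1)\,e_{23} \in F e_{23}$, where $S_q = b^{q-1} + \cdots + c^{q-1}$. Hence also $Y^q - Y \in F e_{23}$.

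Finally I combine: any $w_1(\bar Y_1,\bar Y_2) \in Fe_{23}$ and any $w_2(\bar Y_3, \bar Y_4) \in F e_{23}$; since $(Fe_{23}) \cdot \mathcal{A}^{(1)} = 0$ we conclude $w_1 z_1 = 0$ on $UT_3$, and since $(Fe_{23})^2 = 0$ we conclude $w_1 w_2 = 0$ on $UT_3$. The main (mild) obstacle is the $Y^q - Y$ computation, which needs the induction formula for $U^n$ together with the case split $b = c$ versus $b \neq c$ to evaluate the geometric-type sum $S_q$; everything else is direct expansion in the $e_{ij}$ basis.
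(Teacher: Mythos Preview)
Your proposal is correct and follows essentially the same route as the paper: both reduce to showing that every evaluation of $[y_1,y_2]$ and $y_1^q - y_1$ on $\mathcal{A}^{(0)}$ lands in $Fe_{23}$, whence $w_1 z_1 = 0$ and $w_1 w_2 = 0$ follow from $e_{23}\cdot\mathcal{A}^{(1)} = 0$ and $e_{23}^2 = 0$. One minor remark: the case split $b=c$ versus $b\neq c$ for $S_q$ is not actually needed for this proposition---it suffices that $Y^q - Y \in Fe_{23}$; the paper only carries out that split \emph{after} the proof, to record the explicit value $S_q = (c-b)^{q-1}$ for later use.
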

\begin{proof}
	It	is easy to see that $z_1z_2 \in \textrm{Id}(\mathcal{A}^{(0)}, \mathcal{A}^{(1)})$.
	We may complete the proof by proving that the evaluation of $[y_1,y_2]$ and  $ y_1^q-y_1 $ on $ \mathcal{A}^{(0)} $ are a scalar multiples of $e_{23}$.
	We only evaluate the polynomial 
	$y_1^q-y_1$. 
	Let ${A}=a e_{11} + b e_{22} + c e_{33} + d e_{23} \in \mathcal{A}^{(0)} $. 
	One can be show that 
	$ 	{A}^q = a e_{11} + b e_{22} + c e_{33} + dd_0 e_{23} $,
	where $ d_0 = b^{q-1}+b^{q-2}c + \ldots + c^{q-1} $. 
	Therefore,
	${A}^q - A= d (d_0 -1)e_{23}$.
\end{proof}

In the proof the Proposition \ref{propo superidentities A}, we claim that $ d_0 = (c-b)^{q-1}$. Indeed, observe that 
$ d_0 (c-b)  = c^q-b^q = c-b $.
Thus, if $ c\neq b $ then $ d_0=1 $.
Otherwise if $ c=b $ then $ d_0 =qb^{q-1}  = 0 $. 
Therefore,
\begin{equation}\label{equation q commutator}
	{A}^q - A= d ((c-b)^{q-1} -1)e_{23}.
\end{equation} 
We will need this equation in the proof of Theorem \ref{linear independence of A} below.

Now let us consider $M=\langle w_1z_1, w_1w_2	\mid	w_i \in \Omega_i,i=1,2	\rangle_{T_2}$.

\begin{lemma}\label{lemma one variable}
	Let $p$ be a commutator of variables in $Y\cup Z$. If $p$ has  degree $1$ in $Z$, then $p$ can be write in the form
	\[ p= \alpha[ z_j, y_{j_1}^{(s_1)}, \ldots, y_{j_n}^{(s_n)} ] +z_j h + g \mod  M,\]
	where	$ \alpha \in F $,
	$j_1< \ldots <j_n $, $ s_i$ are nonnegative integers, $ n,j \in \mathbb{N} $,
	$h$ is a linear combination of commutators of variables in $Y$ and $g$ is a linear combination of products of at least two ordered commutators of variables in $Y \cup Z $.  
\end{lemma}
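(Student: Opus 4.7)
The plan is to apply Lemma~\ref{lema comutador ordenado} to write $p = v + v'$, where $v$ is a linear combination of ordered commutators of variables in $Y \cup Z$ and $v'$ is a linear combination of products of at least two such ordered commutators. Since $v'$ is already of the form required for $g$, the task reduces to rewriting each ordered commutator $u$ appearing in $v$.

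Each such $u$ has degree $1$ in $Z$, and since every $z$-variable is larger than every $y$-variable in our order, the unique $z$-variable $z_j$ of $u = [x_{i_1}, \ldots, x_{i_m}]$ (with $x_{i_1} > x_{i_2} \leqslant \ldots \leqslant x_{i_m}$) must sit either in the first slot or in the last. If $z_j$ is first, then $u$ is already of the shape $[z_j, y_{j_1}^{(s_1)}, \ldots, y_{j_n}^{(s_n)}]$ with $j_1 < \ldots < j_n$. If $z_j$ is last, then $m \geqslant 3$ is forced (for $m = 2$ one would need $y_{i_1} > z_j$, which is impossible), and $u = [q, z_j]$ for $q = [y_{i_1}, y_{i_2}^{(s_2)}, \ldots, y_{i_{m-1}}^{(s_{m-1})}]$, a commutator in $Y$ of length at least $2$.

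The central step is to show that such a $qz_j$ lies in $M$. Writing $q = [q_1, q_2]$ with $q_1, q_2 \in F\langle Y \rangle$ (possible since $q$ is a left-normed commutator of length $\geqslant 2$), the polynomial $[q_1, q_2]\, z_j$ is the image of the generator $[y_1, y_2]\, z_1 \in M$ under the graded endomorphism $y_1 \mapsto q_1$, $y_2 \mapsto q_2$, $z_1 \mapsto z_j$; this substitution respects the grading because $q_1, q_2 \in \mathcal{F}^{(0)}$ and $z_j \in \mathcal{F}^{(1)}$. Hence $qz_j \in M$, and so $u = qz_j - z_j q \equiv -z_j q \pmod{M}$, contributing a summand of the form $z_j h$ with $h = -q$ a commutator in $Y$.

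Collecting contributions, all ordered commutators of $v$ with $z_j$ in the first position give scalar multiples of one and the same canonical commutator $[z_j, y_{j_1}^{(s_1)}, \ldots, y_{j_n}^{(s_n)}]$, because the multiset of $y$-variables is fixed by $p$; their coefficients add up to some $\alpha \in F$. The ordered commutators with $z_j$ last combine into a single $z_j h$ term, and $v'$ is absorbed into $g$. I do not anticipate a genuine obstacle: the whole argument rests on the short observation $[q_1, q_2]\, z_j \in M$, which is precisely what makes the ``$z_j$ at the end'' pieces invisible modulo $M$ up to $z_j$-multiplied commutators.
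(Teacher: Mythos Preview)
Your proposal is correct and follows essentially the same route as the paper's proof: apply Lemma~\ref{lema comutador ordenado}, observe that in any ordered commutator of degree~$1$ in $Z$ the single $z_j$ must occupy the first or the last slot, and then use $q\,z_j \in M$ (as a consequence of the generator $[y_1,y_2]\,z_1$) to rewrite $[q,z_j]$ as $-z_j q$ modulo $M$. If anything, your write-up is more explicit than the paper's about why these are the only two positions for $z_j$ and about why $q z_j \in M$.
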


\begin{proof}
	Let 
	$p$ be a commutator with one only variable $z_j$ of degree $1$ and with at least two variables in $Y$.
	By Lemma \ref{lema comutador ordenado}, we can write $p$ as
	\[p= \alpha [z_j,y_{j_1}^{(s_1)},\ldots , y_{j_n}^{(s_n)}] + \sum_{{\delta}} \beta_{\delta} [p_{\delta}, z_j] + g, \]
	for some $\alpha,\beta_{\delta} \in F$, $j_1 < \ldots < j_n$, 
	$\delta$ run over some finite set of indices such that 
	$p_{\delta}$
	is a commutator  of variables in $Y$ 
	and $g$ is linear combination of product of at least two ordered commutators of variables in $Y \cup Z$.
	For each $\delta$ as above,  
	we have   $[p_{\delta} , z_i] = - z_i p_{\delta} \mod M$. Thus, $\displaystyle\sum_{{\delta}} \beta_{\delta} [p_{\delta}, z_i] = z_i h \mod M,$ where $h= -\displaystyle\sum_{{\delta}} \beta_{\delta} p_{\delta}$.
\end{proof}

\begin{lemma}\label{lemma z variable q commutator}
	For all $r_1,\ldots,r_n \geqslant 1$ there exist $s_1,\ldots,s_n <q$ such that
	\[  [ z_j, y_{1}^{(r_1)}, \ldots, y_{{n}}^{(r_{n})} ] =  [ z_j, y_{1}^{(s_1)}, \ldots, y_{{n}}^{(s_{n})} ] + \sum_{\delta} \alpha_{\delta} f_{\delta} p_{\delta} \mod M, 
	\]
	where
	$\alpha_{\delta} \in F$ and $\delta$ run over a finite set of indices such that $f_{\delta}$ is of type
	$[ z_j, y_{1}^{(t_1)}, \ldots, y_{n}^{(t_{n})} ]$
	where $0 \leqslant t_1,\ldots,t_n<q$
	and $p_{\delta}$ is an ordered $q$-commutator. 
\end{lemma}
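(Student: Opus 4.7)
The plan is to induct on the total degree $N = r_1 + \cdots + r_n$. In the base case, $r_i < q$ for every $i$, so one sets $s_i = r_i$ and the sum is empty. For the inductive step, I would split according to whether the outermost exponent $r_n$ already satisfies $r_n < q$ or not.

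\emph{Case 1: $r_n \geq q$.} Write $U = [z_j, y_1^{(r_1)}, \ldots, y_{n-1}^{(r_{n-1})}]$ and $v = y_n^q - y_n$. Lemma \ref{siderov} gives $[U, y_n^{(r_n)}] = [U, y_n^q, y_n^{(r_n-q)}]$, and the linear expansion $y_n^q = y_n + v$ yields
\[
[U, y_n^{(r_n)}] = [U, y_n^{(r_n-q+1)}] + [U, v, y_n^{(r_n-q)}].
\]
Since $[v, y_n] = [y_n^q, y_n] - [y_n, y_n] = 0$, repeated applications of the Jacobi identity slide $v$ past the trailing copies of $y_n$, giving $[U, v, y_n^{(r_n-q)}] = [U', v]$ with $U' = [z_j, y_1^{(r_1)}, \ldots, y_n^{(r_n-q)}]$, an odd polynomial. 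The key observation is that $v$ comes from $y_2^q - y_2 \in \Omega_1$ via the graded substitution $y_2 \mapsto y_n$, and since $M$ contains every substitution of $w_1 z_1$, one has $v U' \in M$; hence $[U', v] \equiv U' v \pmod M$. Apply the inductive hypothesis to both $[z_j, y_1^{(r_1)}, \ldots, y_n^{(r_n-q+1)}]$ and $U'$ (each of strictly smaller total degree) and multiply the reduction of $U'$ on the right by $v$. The only remaining point is that the products $p'_{\delta} v$ coming from the error terms of the reduction of $U'$ vanish in $M$, because each is a product of two ordered $q$-commutators and therefore a substitution instance of $w_1 w_2$ with $w_i \in \Omega_i$.

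\emph{Case 2: $r_n < q$ but $r_k \geq q$ for some $k < n$.} Apply the inductive hypothesis directly to $U$ (smaller total degree since $r_n \geq 1$):
\[
U \equiv [z_j, y_1^{(s_1)}, \ldots, y_{n-1}^{(s_{n-1})}] + \sum_{\delta} \alpha_{\delta} f'_{\delta} p'_{\delta} \pmod M,
\]
with all $s_k$ and all $t$-exponents of the $f'_{\delta}$ less than $q$. Taking $[\,\cdot\,, y_n^{(r_n)}]$ on both sides turns the leading piece into the desired $[z_j, y_1^{(s_1)}, \ldots, y_{n-1}^{(s_{n-1})}, y_n^{(r_n)}]$ (with $s_n := r_n < q$), while each error piece becomes $[f'_{\delta} p'_{\delta}, y_n^{(r_n)}]$, to which I apply Lemma \ref{lemma uv}. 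This expands as $\sum_{i=0}^{r_n} \binom{r_n}{i} [f'_{\delta}, y_n^{(i)}] [p'_{\delta}, y_n^{(r_n - i)}]$. The first factor is immediately of the required $f$-form because $i \leq r_n < q$, and one checks by inspecting the two defining shapes of ordered $q$-commutators that appending $y_n^{(r_n - i)}$ to a $p'_{\delta}$ whose indices lie in $\{1, \ldots, n-1\}$ again produces an ordered $q$-commutator, since the new largest index is $n$ and the new exponent $r_n - i$ is less than $q$.

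The main obstacle is really the bookkeeping: verifying in Case 2 that $[p'_{\delta}, y_n^{(r_n - i)}]$ still fits one of the two allowed shapes of an ordered $q$-commutator, and in Case 1 that the extra error terms $p'_{\delta} v$ get absorbed by the ``product of two ordered $q$-commutators lies in $M$'' argument. Everything else — Lemma \ref{siderov}, Lemma \ref{lemma uv}, the vanishing $[y_n^q - y_n, y_n] = 0$, and the $T_2$-ideal description of $M$ as containing all substitutions of $w_1 z_1$ and $w_1 w_2$ — plugs in routinely.
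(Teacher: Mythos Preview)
Your proof is correct and follows essentially the same approach as the paper: the same key ingredients (Lemma \ref{siderov}, Lemma \ref{lemma uv}, and the relations $w_1 z_1,\, w_1 w_2 \in M$) are deployed in the same way. The only difference is organizational --- the paper inducts on $n$ rather than on the total degree, first establishing the one-variable reduction $[z,y^{(r)}] \equiv [z,y^{(s)}] + \sum_{i<q}\beta_i[z,y^{(i)}](y^q-y) \pmod M$ as a standalone formula (your Case~1 computation) and then substituting $z \mapsto p \in \mathcal{F}^{(1)}$ into it and expanding via Lemma \ref{lemma uv}, which is exactly your Case~2.
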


\begin{proof}
	We will proceed by induction on $n$.
	We claim that for $r\geqslant 1$ there exists $1\leqslant s < q$ and $\beta_i \in F$ such that
	\begin{align}\label{equation m=1}
		[z,y^{(r)}] =  [z,y^{(s)}] + \sum_{i=0}^{q-1} \beta_i [z,y^{(i)}](y^q-y) \mod M.
	\end{align}
	Observe that, by Lemma \ref{siderov} we have that $[z,y^{(q)}] = [z,y] + [ z , y^q-y] = [z,y] +  z  (y^q-y) \mod M$. Thus, for every $t$
	\[	[z,y^{(q+t)}]=[z,y^{(q)},y^{(t)}]= [z,y^{(1+t)}] + [ z ( y^q-y), y^{( t)}].\]
	On the other hand, by using induction on $t$, one can verify that 
	$$[ z ( y^q-y), y^{( t)}] = [ z , y^{( t)}] ( y^q-y) \mod M$$ and since $( y^q-y)^2 \in M$ we can repeat the process for $[z,y^{(1+t)}]$ and $[ z , y^{( t)}],$ therefore we obtain the equation (\ref{equation m=1}).
	
	Suppose that $n>1$ and that 
	\[ p= [ z_j, y_{1}^{(r_1)}, \ldots, y_{{n-1}}^{(r_{n-1})} ] =  [ z_j, y_{1}^{(s_1)}, \ldots, y_{{n-1}}^{(s_{n-1})} ] + \sum_{\delta} \alpha_{\delta} f_{\delta} p_{\delta} \mod M,\]
	where
	$\alpha_{\delta} \in F$ and $\delta$ run over a finite set of indices such that $f_{\delta}$ is a ordered commutator of the type
	$ 	[ z_j, y_{1}^{(t_1)}, \ldots, y_{n}^{(t_{n})} ] $,
	$p_{\delta}$ is an ordered $q-$commutator and the powers $s_i,t_i$ satisfy the conditions of Lemma.   
	Using the fact that $p\in \mathcal{F}^{(1)}$ we can  substitute $z$ by $p$  in (\ref{equation m=1}). That is,  
	\begin{align*}
		[p, y_{n}^{(r_n)}] = [p, y_{n}^{(s_n)}] + \sum_{0 \leqslant i<q} \beta_i  [ p ,y_{n}^{(i)}](y_{n}^q-y_{n})
		\mod M.
	\end{align*}
	By our induction assumption we have
	$$[p, y_{n}^{(i)}] 
	=[ z_j, y_{1}^{(s_1)}, \ldots, y_{{n-1}}^{(s_{n-1})} , y_{n}^{(i)} ] +
	\sum_{\delta} \beta_{\delta} [f_{\delta} p_{\delta} , y_{n}^{(i)} ] 
	\mod M, $$
	for all $i=0, \ldots, q-1$.
	Thus, remains to show that for every $ \delta $ and every $i$ the polynomials 
	$ [f_{\delta} p_{\delta} , y_{n}^{(i)} ] $ and $ [f_{\delta} p_{\delta} , y_{n}^{(i)} ] (y_{n}^q-y_{n}) $ have the desired form.
	Indeed, 
	by Lemma 
	\ref{lemma uv} we have
	\[
	[f_{\delta} p_{\delta} , y_{n}^{(i)} ] = 
	\sum_{0\leqslant l \leqslant i} \binom{i}{l}[f_{\delta} , y_{n}^{(l)}  ] [ p_{\delta} , y_{n}^{(i-l)} ].
	\] 
	Note that each of $[ p_{\delta} , y_{n}^{(i-l)} ]$
	and 
	$ y_{n}^q-y_{n} $ are ordered $q-$commutators, $[f_{\delta} , y_{n}^{(l)}  ]$ is in the required form and 
	$ [f_{\delta} p_{\delta} , y_{n}^{(i)} ] (y_{n}^q-y_{n}) = 0 \mod M $, for all $\delta, i, l$ as above. 
\end{proof}

Denote by $J$ the $T_2$-ideal generated by the set (\ref{superidentities of A}). We have that $ M \subseteq J $.

\begin{lemma}\label{lemma ppp}
	Let 
	$p_1,p_2$ be two commutators of variables in $ Y\cup Z $.
	If  $p_1p_2\notin J$ then $p_1$ has degree $ 1 $ in $Z$ and $p_2$ has no variable in $Z$.   Consequently,
	$p_1p_2p_3 \in J$.
\end{lemma}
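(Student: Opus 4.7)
The plan is to classify the pair $(p_1, p_2)$ by the pair of degrees in $Z$, denoted $(d_1, d_2)$, and to show that only the single configuration $(d_1, d_2) = (1, 0)$ can avoid forcing $p_1 p_2 \in J$. First I would dispose of the configurations with too many $z$'s: if $d_i \geqslant 2$ for some $i$, then each monomial in the signed expansion of the commutator $p_i$ contains at least two variables in $Z$, so by Lemma \ref{non z's} every such monomial, and hence $p_i$ itself, lies in $\langle z_1 z_2 \rangle_{T_2} \subseteq J$. If $d_1 = d_2 = 1$, then every monomial in $p_1 p_2$ has exactly two $z$-variables, so again Lemma \ref{non z's} gives $p_1 p_2 \in J$.

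The two remaining forbidden configurations, $(0, 0)$ and $(0, 1)$, I would handle by realizing $p_1 p_2$ as the image of a generator of $J$ under a graded substitution. Any commutator of length $\geqslant 2$ is left-normed, so $p_i = [a_i, b_i]$ with $a_i, b_i$ in the subalgebra generated by the variables of $p_i$. When $d_1 = d_2 = 0$, one has $a_i, b_i \in F\langle Y \rangle \subseteq \mathcal{F}^{(0)}$, and the graded substitution $(y_1, y_2, y_3, y_4) \mapsto (a_1, b_1, a_2, b_2)$ sends the generator $[y_1, y_2][y_3, y_4]$ of $J$ to $p_1 p_2$. When $d_1 = 0$ and $d_2 = 1$, write $p_1 = [a, b]$ with $a, b \in F\langle Y \rangle$; since $p_2 \in \mathcal{F}^{(1)}$, the graded substitution $y_1 \mapsto a$, $y_2 \mapsto b$, $z_1 \mapsto p_2$ sends the generator $[y_1, y_2] z_1$ to $p_1 p_2$. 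In both cases $p_1 p_2 \in J$, and so the only surviving configuration is $(d_1, d_2) = (1, 0)$, which establishes the first assertion.

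For the consequence about $p_1 p_2 p_3$, if $p_1 p_2 \in J$ we are done; otherwise $d_1 = 1$ and $d_2 = 0$, so $p_2$ is a commutator of variables in $Y$ only. Rerunning the above case analysis on the pair $(p_2, p_3)$, whichever of $d_3 \in \{0, 1, \geqslant 2\}$ occurs, the pair $(p_2, p_3)$ falls into one of the four forbidden configurations already handled, so $p_2 p_3 \in J$ and hence $p_1 p_2 p_3 = p_1(p_2 p_3) \in J$. I do not expect a real obstacle; the argument is essentially bookkeeping, matching each forbidden pattern of $Z$-degrees to one of the three generating families $z_1 z_2$, $w_1 z_1$, $w_1 w_2$ of $J$.
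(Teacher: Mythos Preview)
Your proposal is correct and follows essentially the same case-by-case strategy as the paper: both arguments rule out every $Z$-degree configuration $(d_1,d_2)\neq(1,0)$ by matching it to one of the generators $z_1z_2$, $[y_1,y_2]z_1$, $[y_1,y_2][y_3,y_4]$ of $J$. Your treatment is in fact a bit more careful than the paper's, which simply asserts via Lemma~\ref{non z's} that $d_1+d_2\leqslant 2$ and never explicitly addresses $(2,0)$ or $(0,2)$; you handle these by observing that any $p_i$ with $d_i\geqslant 2$ already lies in $\langle z_1z_2\rangle_{T_2}$, and you spell out the graded substitutions and the final step $p_2p_3\in J$ more explicitly than the paper does.
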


\begin{proof}
	First, observe that by Lemma \ref{non z's}, the total sum of degree of $p_1$ and $ p_2$ in $ Z $ is no more than $2$.
	Suppose that $p_1,p_2$ have no variable in $Z,$ 
	soon $p_1p_2 \in \langle [y_1,y_2] [y_3,y_4]\rangle_{T_2}.$ 
	If each of $p_1$ and $p_2$ has degree $ 1 $  in $Z$, 
	it follows that $p_1p_2 \in \langle z_1z_2\rangle_{T_2}.$ 
	Finally, when $p_1$ has no variable in $Z$ and $p_2$ has degree $ 1 $ in $Z$, we obtain that $p_1p_2 \in \langle [y_1,y_2]z_1 \rangle_{T_2}.$ 
	
	The last affirmation of the Lemma follows immediately. 
\end{proof}

\begin{propo}
	The vector space $F \langle Y\cup Z\rangle / J $ is generated module $J$ by
	polynomials of type
	\begin{align}\label{generators module J}
		y_{1}^{r_1}	\cdots	y_{{m}}^{r_{m}} [z_j,y_{j_1}^{(s_1)}, \ldots, y_{{j_n}}^{(s_{n})}]^{\theta_1} 
		p^{\theta_2} 
	\end{align}
	where  $ j_1< \ldots < j_n $, $0\leqslant r_1,\ldots,r_{m} , s_1,\ldots,s_{n}< q$,  $ m,n,j \in \mathbb{N} $,
	$p$ is an ordered $q$-commutator,	
	$\theta_1,\theta_2 \in \{0,1\}$ and 
	$ \displaystyle\sum_{i=1}^m r_i +\theta_1+\theta_2 \geqslant 1 $.
\end{propo}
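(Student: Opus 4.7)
The plan is to start from the standard monomial decomposition (\ref{basis}) for $F\langle Y\cup Z\rangle$, which expresses every polynomial as a linear combination of $y_1^{r_1}\cdots y_m^{r_m}\, z_1^{s_1}\cdots z_n^{s_n}\, p_1\cdots p_l$, and reduce each such monomial modulo $J$ in a sequence of controlled steps until it takes the shape (\ref{generators module J}).

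Two preliminary bounds kill most of the structure. First, since $z_1 z_2\in J$ and Lemma \ref{non z's} gives $z_1 f z_2\in\langle z_1z_2\rangle_{T_2}\subseteq J$, any monomial of total $Z$-degree at least $2$ lies in $J$, so I may assume the total $Z$-degree is $0$ or $1$. Second, Lemma \ref{lemma ppp} says $p_1p_2p_3\in J$ in every case, and $p_1p_2\notin J$ only when $p_1$ has $Z$-degree $1$ and $p_2$ is a pure $Y$-commutator. After these reductions, the tail of the monomial after the prefix $y_1^{r_1}\cdots y_m^{r_m}$ is one of: empty, a single $z_j$, a single pure $Y$-commutator $v$, a single $Z$-degree-one commutator $p_1$, $z_j v$, or $p_1 v$ with $v$ pure in $Y$.

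Each tail is then brought to the canonical shape. For a pure $Y$-tail $v$ alone, I invoke Lemma \ref{triangular basis} (applicable because $J$ contains the ordinary $T$-ideal $\langle w_1w_2\mid w_i\in\Omega_i\rangle_T$) to rewrite $y_1^{r_1}\cdots y_m^{r_m}\cdot v$ in the form $y_1^{r_1'}\cdots y_m^{r_m'} p^{\theta_2}$ with $0\leq r_i'<q$ and $p$ an ordered $q$-commutator. For a tail $z_j v$, any pure $Y$-commutator $v$ factors as a Lie bracket of two even expressions, so $v z_j\in J$ by the $T_2$-substitution of $w_1 z_1\in J$ with $w_1=[y_1,y_2]$; hence $z_j v\equiv[z_j,v]\pmod{J}$, reducing to the single-$z$-commutator tail. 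For tails $p_1$ or $p_1 v$, Lemma \ref{lemma one variable} decomposes $p_1=\alpha[z_j,y_{j_1}^{(s_1)},\ldots,y_{j_n}^{(s_n)}]+z_j h+g\pmod{M}$, and since $M\subseteq J$ this decomposition is valid modulo $J$. The $z_j h$ summand is of the $z_j v$ type just treated, and every term of $g$ is a product of at least two ordered commutators, which either lies in $J$ by Lemma \ref{lemma ppp} (when $v$ is also present, giving a product of $\geq 3$ commutators, or when the factors do not match the admissible shape of that lemma) or is already of the two-commutator admissible form appearing in (\ref{generators module J}). Lemma \ref{lemma z variable q commutator} then normalizes the ordered $z$-commutator so that all inner powers lie in $[0,q)$, producing only further terms of the admissible shape.

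The final step is the reduction of the outer $y$-exponents modulo $q$ via $y_i^q=y_i+(y_i^q-y_i)$. Without any $z$-commutator this is subsumed by Lemma \ref{triangular basis}. With a $z$-commutator $[z_j,\ldots]$ in the tail, the key identity is $(y_i^q-y_i)[z_j,\ldots]\in J$ (from the $T_2$-closure of $w_1 z_1\in J$ with $w_1=y_2^q-y_2$), which annihilates every correction that moves a factor $y_i^q-y_i$ to the left of the $z$-commutator; the bracket corrections produced while pushing $y_i^q-y_i$ past the other $y$-powers yield ordered $q$-commutators which, once multiplied by the surviving $[z_j,\ldots]$, are absorbed by combining Lemma \ref{lemma ppp} with the same identity. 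The main obstacle I anticipate is the careful bookkeeping at the interface between Lemma \ref{lemma one variable}, Lemma \ref{lemma z variable q commutator}, and the $y$-exponent normalization — in particular, verifying that each auxiliary summand produced in these intermediate reductions either belongs to $J$ or already matches the shape (\ref{generators module J}), so that the procedure terminates in the claimed normal form.
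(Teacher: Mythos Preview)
Your overall plan coincides with the paper's: start from (\ref{basis}), use Lemma \ref{lemma ppp} to cut the commutator tail down to at most two factors of the right shape, invoke Lemmas \ref{lemma one variable} and \ref{lemma z variable q commutator} to normalize the $z$-commutator, and use Lemma \ref{triangular basis} to normalize the pure-$Y$ part and the outer $y$-powers. There is, however, a genuine circularity in your treatment of the tail $z_j v$.

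You reduce $z_j v$ (with $v$ a pure $Y$-commutator) to $[z_j,v]$, declare this a ``single-$z$-commutator tail'', and feed it to Lemma \ref{lemma one variable}. But Lemma \ref{lemma one variable} itself produces a summand $z_j h$ with $h$ a linear combination of $Y$-commutators, which you then send back to the $z_j v$ case. This loops: for $p_1=[z_1,[y_2,y_1]]=-[y_2,y_1,z_1]$ the output of Lemma \ref{lemma one variable} is exactly $z_1[y_2,y_1]\pmod M$ (here $\alpha=0$ and $g=0$), so your procedure oscillates between $z_1[y_2,y_1]$ and $[z_1,[y_2,y_1]]$ without ever landing in the form (\ref{generators module J}).

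The fix, and what the paper does, is \emph{not} to rewrite $z_j v$ as $[z_j,v]$. Observe instead that $z_j$ is the degenerate case of $[z_j,y_{j_1}^{(s_1)},\ldots,y_{j_n}^{(s_n)}]$ with all $s_i=0$, so $z_j\cdot v$ is already of the shape (\ref{generators module J}) with $\theta_1=\theta_2=1$, once $v$ is normalized to an ordered $q$-commutator via the Lie part of Lemma \ref{triangular basis} (legitimate because $\langle [w_1,w_2]\mid w_i\in\Omega_i\rangle_{L_T}\subseteq J$). With this correction the $z_j h$ output of Lemma \ref{lemma one variable} is terminal, and the remainder of your argument goes through as written.
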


\begin{proof}
	By (\ref{basis}) and by Lemma \ref{lemma ppp} the vector space $F \langle Y\cup Z\rangle / J $ is generated by polynomials of the type
	$y_{1}^{l_1}	\cdots	y_{{m}}^{l_{m}} p_1^{\theta_1} 
	p_2^{\theta_2} \mod J$ where, $p_1\in Z$ or $p_1$ is a commutator of variables in $Y\cup Z$ with degree $1$ in $Z$, $p_2$ is a commutator of variables in $Y$ and $\theta_1,\theta_2 \in \{0,1\} $. 
	By Lemma \ref{lemma one variable}  {it suffices consider} $ p_1 $ in the form  $ 
	[z_{j},y_{j_1}^{(s_1)} , \ldots,y_{j_{n}}^{(s_n)}] $, $ j_1< \ldots < j_n $. 
	Better yet,	   we can suppose yet that 
	$0 \leqslant s_1,\ldots,s_{n} <q $ by Lemma \ref{lemma z variable q commutator}, however, we must added the ordered $ q $-commutators as possibilities for $p_2$.
	By Lemma \ref{triangular basis}	  we can suppose that all $ p_2 $ are ordered $q$-commutators.
	For this purpose, we will use that \[ \langle [w_1 , w_2] \mid w_i \in \Omega_i \rangle_{L_T} \subseteq 
	\langle [w_1 , w_2] \mid w_i \in \Omega_i \rangle_{T_2} \subseteq J . \] 
	
	Finally, again by Lemma \ref{triangular basis}, we have that 
	$ y_{1}^{l_1}	\cdots	y_{{m}}^{l_{m}} $ is a linear combination of	$ y_{1}^{r_1}	\cdots	y_{{m}}^{r_{m}}	p^{\theta} $ where $ 0 \leqslant r_1,\ldots,r_m < q$, $ p $ is an ordered $ q $-commutator and $ \theta =0,1 $. Since that $ p p_1 p_2 \in J,$ 
	it follows that we can only consider $\theta=0$, 
	which concludes this proof.
\end{proof}

\begin{theorem}\label{linear independence of A}
	Let $F$ be a finite field of $q$ elements.
	A basis for the superidentities of $UT_3$ with the grading $(\mathcal{A}^{(0)},\mathcal{A}^{(1)})$ is given by set (\ref{superidentities of A}). Moreover, a basis for the vector space $F\langle Y\cup Z \rangle / \emph{Id}(\mathcal{A}^{(0)},\mathcal{A}^{(1)})$ consist of the set given in (\ref{generators module J}).
\end{theorem}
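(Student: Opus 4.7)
The plan is to extend the strategy of Theorem~\ref{linear independence}. By Proposition~\ref{propo superidentities A} we have $J\subseteq\textrm{Id}(\mathcal{A}^{(0)},\mathcal{A}^{(1)})$, and the preceding proposition shows that polynomials of type~(\ref{generators module J}) span $F\langle Y\cup Z\rangle/J$, so it suffices to prove these polynomials are linearly independent modulo $\textrm{Id}(\mathcal{A}^{(0)},\mathcal{A}^{(1)})$. I would suppose $f$ is such a linear combination with $f\in\textrm{Id}(\mathcal{A}^{(0)},\mathcal{A}^{(1)})$ and deduce that every coefficient vanishes.

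First I would split $f=g+h$, where $g$ collects the terms with $\theta_1=0$ (no $z$-variable) and $h$ the terms with $\theta_1=1$. Setting all $z_j=0$ gives $g\in\textrm{Id}(\mathcal{A}^{(0)},\mathcal{A}^{(1)})$, whence also $h\in\textrm{Id}(\mathcal{A}^{(0)},\mathcal{A}^{(1)})$. Now $\textrm{span}_F\{e_{22},e_{33},e_{23}\}$ is a subalgebra of $\mathcal{A}^{(0)}$ isomorphic to $UT_2$, so restricting the $y$-evaluations to it gives $g\in\textrm{Id}(UT_2)$; since $g$ is a linear combination of exactly the basis polynomials $y^r p^\theta$ exhibited by Lemma~\ref{triangular basis}, every coefficient of $g$ vanishes.

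Next, by linearity in the $z_j$'s (set all but one to zero) I may assume $h$ depends only on $z_1$, writing $h=\sum\alpha_{r,s}y^r[z_1,y^{(s)}]+\sum\beta_{r,s,p}y^r[z_1,y^{(s)}]p$. To isolate the $\alpha$'s I would evaluate $z_1\mapsto e_{13}$: one checks $[e_{13},A]=(c-a)e_{13}$ for $A=ae_{11}+be_{22}+ce_{33}+de_{23}$, and every ordered $q$-commutator in $Y$, evaluated on $\mathcal{A}^{(0)}$, lies in $Fe_{23}$; since $e_{13}e_{23}=0$, every $\beta$-term drops out. The surviving identity $\sum\alpha_{r,s}\prod_i a_i^{r_i}\prod_i(c_i-a_i)^{s_i}=0$, together with Lemma~\ref{field identities} applied to the independent variables $a_i$ and $c_i-a_i$, forces each $\alpha_{r,s}=0$.

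The main obstacle is killing the $\beta_{r,s,p}$. My plan is to evaluate $z_1\mapsto e_{12}$ and $y_i\mapsto a_ie_{11}+B_i$ with $B_i\in\textrm{span}\{e_{22},e_{33},e_{23}\}\cong UT_2$; the key structural facts are that $e_{11}$ is central in $\mathcal{A}^{(0)}$ and orthogonal to each $B_i$. A short induction then shows that $[e_{12},y_{j_1}^{(s_1)},\ldots,y_{j_n}^{(s_n)}]$ has $e_{12}$-component $\prod_k(b_{j_k}-a_{j_k})^{s_k}$, while $p(y)=p(B_1,\ldots,B_M)=\tau_p(B)\,e_{23}$, where $\tau_p(B)$ is the scalar computed inside the $UT_2$ subalgebra. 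Using $e_{12}e_{23}=e_{13}$, the identity becomes $\sum_{r,s,p}\beta_{r,s,p}\prod_i a_i^{r_i}\prod_k(b_{j_k}-a_{j_k})^{s_k}\tau_p(B)=0$. The crucial move is to change to the genuinely independent parameters $(a_i,\,u_i=b_i-a_i,\,f_i=c_i-b_i,\,d_i)$: then $\tau_p$ depends only on $(d,f)$, decoupling the three factors. Lemma~\ref{field identities} on the $(a,u)$-part yields $\sum_p\beta_{r,s,p}\tau_p(d,f)\equiv 0$ for each $(r,s)$, which is precisely the statement that $\sum_p\beta_{r,s,p}p\in\textrm{Id}(UT_2)$, and a final appeal to Lemma~\ref{triangular basis} (ordered $q$-commutators are linearly independent modulo $\textrm{Id}(UT_2)$) concludes $\beta_{r,s,p}=0$.
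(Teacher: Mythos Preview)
Your proof is correct and follows the same broad outline as the paper's (separate the $z$-free part, then the $\theta_2=0$ part, then the $\theta_2=1$ part), but your treatment of the last and hardest piece takes a genuinely different route.

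For the $\theta_2=0$ summand the two arguments are the same idea in different dress: the paper observes that $\mathcal{C}^{(0)}=\textrm{span}_F\{e_{22},e_{33}\}$, $\mathcal{C}^{(1)}=\textrm{span}_F\{e_{13}\}$ is a graded subalgebra of $\mathcal{A}^{(0)}\oplus\mathcal{A}^{(1)}$ isomorphic to canonically graded $UT_2$ and simply invokes Theorem~\ref{linear independence}; you reach the same conclusion via the direct evaluation $z_1\mapsto e_{13}$.

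The real divergence is on the $\theta_2=1$ summand. The paper also evaluates at $A_i=a_ie_{11}+(a_i+b_i)e_{22}+(a_i+b_i+c_i)e_{33}+d_ie_{23}$ and $z_1\mapsto e_{12}$ (so the parametrization coincides with yours), but then \emph{splits into two cases} according to whether the ordered $q$-commutator $p$ has the shape $[y_{j_1},y_{j_2}^{(t_2)},y_{j_1}^{(t_1-1)},\ldots]$ or $[y_{l_1}^q-y_{l_1},\ldots]$, and in each case runs a maximality argument on the leading index ($j_1^{*}$, resp.\ $l_1^{*}$) together with a special choice of the $d_i$'s to isolate one family of coefficients at a time. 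Your route avoids both the case split and the extremal-index trick: the change of coordinates $(a_i,u_i,f_i,d_i)$ cleanly decouples the three factors, so Lemma~\ref{field identities} strips off the $(r,s)$ part in one stroke, and the residual relation $\sum_p\beta_{r,s,p}\,p\in\textrm{Id}(UT_2)$ is killed uniformly by Lemma~\ref{triangular basis}. What you gain is a shorter, case-free argument that recycles Lemma~\ref{triangular basis} rather than reproving its content by hand; what the paper's approach buys is explicit closed formulas for the evaluations of both kinds of $q$-commutators (via equation~(\ref{equation q commutator})), making every step fully concrete.
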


\begin{proof}
	Let us first show that the set given in (\ref{generators module J}) is linearly independent module 
	$\textrm{Id}(\mathcal{A}^{(0)},\mathcal{A}^{(1)})$. 
	Let $f$ be a linear combination of (\ref{generators module J}) such that $f\in \textrm{Id}(\mathcal{A}^{(0)},\mathcal{A}^{(1)})$. We can write $f = f_0 + f_1$ 
	where 
	$ f_0 $ and $ f_1 $ are the sumands of $f$ which $\theta_1 = 0$ and $ \theta_1 = 1, $ respectively.
	Since neither variable in $ Z $ occur in $ f_0 $ we have that $ f_0 \in  \textrm{Id}(\mathcal{A}^{(0)},\mathcal{A}^{(1)})$ 
	and so
	$ f_1 \in  \textrm{Id}(\mathcal{A}^{(0)},\mathcal{A}^{(1)})$ too.
	
	Note that  
	\begin{equation}\label{equation f_0}
		f_0 \in  F \langle Y \rangle \cap \textrm{Id}(\mathcal{A}^{(0)},\mathcal{A}^{(1)}) \subseteq \textrm{Id}(\mathcal{A}^{(0)}) \subseteq  F \langle Y \rangle,
	\end{equation}
	where $ \textrm{Id}(\mathcal{A}^{(0)}) $ denotes the $ T $-ideal of ordinary polynomial identities of the algebra $ \mathcal{A}^{(0)} $. 
	How the algebra $\mathcal{A}^{(0)}$ contains a copy of $ UT_2 $ we have that $\textrm{Id} (\mathcal{A}^{(0)})  \subseteq \textrm{Id}(UT_2)$ 
	and then
	$ f_0=0 $ by Lemma \ref{triangular basis}.

	Now, we can write $f_1 = g_0 + g_1$ 
	where
	$ g_0 $ and $ g_1 $ are the summands of $ f_1 $ such that $\theta_2=0$ and $\theta_2=1$, respectively.
	One immediately see that $ g_1 \in \textrm{Id}^{gr}(UT_2)$ since each 
	$p \in \textrm{Id}^{gr}(UT_2) $. 
	Note that if $ f_1 \in \textrm{Id}^{gr}(UT_2) $ then $ g_0 \in \textrm{Id}^{gr}(UT_2) $ and thus have $ g_0 = 0 $ by Theorem \ref{linear independence}.
	We will show so that $ f_1\in  \textrm{Id}^{gr}(UT_2)$.
	For this we consider $UT_2$ as a graded subalgebra of $ \mathcal{A}^{(0)} \oplus \mathcal{A}^{(1)} $. 
	More specifically,  
	$ \mathcal{C}^{(0)} = \textrm{span}_F \{e_{22}, e_{33}\}$ and $ \mathcal{C}^{(1)} = \textrm{span}_F \{e_{13}\} $ are subspaces of $ \mathcal{A}^{(0)} $ and $ \mathcal{A}^{(1)} $ respectively. One can verify that 
	$ \mathcal{C}^{(0)} \oplus \mathcal{C}^{(1)} $ is a graded subalgebra of $ \mathcal{A}^{(0)} \oplus \mathcal{A}^{(1)} $ isomorphic to 
	$ UT_2 $ with the canonical graduation.
	Hence, $	\textrm{Id}(\mathcal{A}^{(0)} , \mathcal{A}^{(1)} )	\subseteq \textrm{Id}(\mathcal{C}^{(0)} , \mathcal{C}^{(1)})  = 	\textrm{Id}^{gr}(UT_2) 	  $ and therefore, $ f_1 \in \textrm{Id}^{gr}(UT_2)  $.
	
	Thus, we can suppose that $ f= f_1(y_1,\ldots, y_m, z_1)$ is a linear combination of 
	$  y_{1}^{r_1}	\cdots	y_{{m}}^{r_{m}} [z_1,y_{1}^{(s_1)}, \ldots, y_{{m}}^{(s_{m})}] p $
	with $ r_i,s_i $ and $ p $ as in (\ref{generators module J}).
	Let us write $ f = h+h_q $ where $ h $ and $ h_q $ are the summands of $f$ such that $p$ is of type  
	$ [y_{j_1},y_{j_2}^{(t_2  )}, y_{j_1}^{(t_1-1)} , y_{j_3}^{(t_3)} , \ldots, y_{j_{n}}^{(t_{n})}] $ 
	and 
	$ [y_{l_1}^q - y_{l_1}, y_{l_2}^{(t_2)} ,  \ldots , y_{l_{k}}^{(t_{k})}] $, 
	respectively.
	Define
	$A_i=a_ie_{11}+(a_i+b_i)e_{22}+(a_i+b_i+c_i)e_{33}+d_ie_{23} \in \mathcal{A}^{(0)} $
	where $ a_i,b_i,c_i,d_i \in F,  i=1,\ldots,m $. 
	Straightforward calculations gives
	\begin{align*}
		A_1^{r_m}	\cdots A_m^{r_m} [e_{12}, A_1^{(s_1)},	\ldots, A_m^{(s_m)} ]  =	a_1^{r_1}	\cdots a_m^{r_m} b_1^{s_1}\cdots	b_m^{s_m}e_{12}+\beta e_{13}
	\end{align*}
	for some
	$ \beta \in F$, and
	\begin{align*}
		[A_{j_1}, A_{j_2}^{(t_2)}, A_{j_1}^{(t_1-1)}, A_{j_3}^{(t_3)}	,	\ldots, A_{j_n}^{(t_{n})} ]
		=	(d_{j_1}c_{j_2}-d_{j_2}	c_{j_1}) c_{j_1}^{t_1-1}	c_{j_2}^{t_2-1}	c_{j_3}^{t_3} \cdots c_{j_n}^{t_{n}}	e_{23}.
	\end{align*}
	By using the equation (\ref{equation q commutator}) one can verify that
	\begin{align*}
		[A_{l_1}^q-A_{l_1}, A_{l_2}^{(t_2)}	\ldots, A_{l_k}^{(t_{k})} ] 		=	
		d_{l_1}	(c_{l_1}^{q-1}-1)	c_{l_2}^{t_2}\cdots c_{l_k}^{t_{k}}e_{23}.
	\end{align*}
	
	Suppose that $ h\neq 0 $. There exists some term 
	\[	y_{1}^{r_1}	\cdots	y_{{m}}^{r_{m}} [z_1,y_{1}^{(s_1)}, \ldots, y_{{m}}^{(s_{m})}]
	[y_{j_1},y_{j_2}^{(t_2)}, y_{j_1}^{(t_1-1)}, y_{j_3}^{(t_3)} , \ldots, y_{j_{n}}^{(t_{n})}]
	\]
	of $h$ 
	such that their coefficient $ \alpha_{\mathbf{r,s,j,t}} $,
	where 
	$\mathbf{r}=\{r_i\}_{i=1}^m$,
	$\mathbf{s}=\{s_i\}_{i=1}^m$
	are $ m $-uples, $\mathbf{j}=\{j_i\}_{i=1}^n$ and $\mathbf{t}=\{t_i\}_{i=1}^n$ are $n$-uples, is nonzero.	Let $ j_1^* $ be  the maximum between all the indices $ j_1 $ such that  $ \alpha_{\mathbf{r,s,j,t}} \neq 0 $.
	Making $d_{j_1^*} = c_{j_1^*}$ and $ d_{i} = 0 $ for all $ i\neq j_1^* $, we proceed to evaluate  $ y_i $ by $ A_i $ and $ z_1 $ by $e_{12}$ in $f$.
	Since $ c_{j_1^*} (c_{j_1^*}^{q-1}-1) = 0 $ we have that  $ h_q(A_1,\ldots,A_m,e_{12})=0 $ and 
	\begin{align*}
		h	(A_1,\ldots,A_m,e_{12})	=	\sum_{\mathbf{s,r,\hat{j},t}}	\alpha_{\mathbf{s,r,j,t}}	a_1^{r_1}	\cdots a_m^{r_m} b_1^{s_1}\cdots	b_m^{s_m}	 c_{j_1^*}^{t_1}	c_{j_2}^{t_2}	c_{j_3}^{t_3} \cdots c_{j_n}^{t_{n}} e_{23}=0
	\end{align*}
	where $ \mathbf{\hat{j}} = \{j_i\}_{i=2}^n $.
	Note that  the maximality of $ j_1^* $ guarantees that the possible  term of $ h $ with $j_2=j_1^*$ not appear. 
	Thus, since $ a_i,b_i,c_i $ are arbitrary, by Lemma \ref{field identities} follows that $ \alpha_{\mathbf{r,s,{j},t}}=0 $ for all $\mathbf{r,s},\hat{\mathbf{j}},\mathbf{t} $, a contradiction. 
	Therefore, $ h=0 $.
	
	An analogous situation occur with $ h_q $. Suppose that there exists a term 
	\[ y_{1}^{r_1}	\cdots	y_{{m}}^{r_{m}} [z_1,y_{1}^{(s_1)}, \ldots, y_{{m}}^{(s_{m})}] [y_{l_1}^q - y_{l_1}, y_{l_2}^{(t_2)} ,  \ldots , y_{l_{k}}^{(t_{k})}]  \]
	of $h_q$ 
	such that their coefficient $ \beta_{\mathbf{r,s,l,t}} $,
	is nonzero.
	Let $ l_1^* $ be  the maximum integer between all the indices $ l_1 $ such that  $ \beta_{\mathbf{r,s,l,t}} \neq 0 $.
	We evaluate $ y_i $ by $ A_i $ and $ z_1 $ by $e_{12}$ in $ h_q $ with the additional condition 
	$d_{l_1^*} = - 1$, $ c_{l_1^*} = 0 $ and $ d_{i} = 0 $ for all $ i\neq l_1^* $.
	Then 
	\begin{align*}
		h_q	(A_1,\ldots,A_m,e_{12})	=	\sum_{\mathbf{s,r,\hat{l},t}}	\beta_{\mathbf{s,r,l,t}}	a_1^{r_1}	\cdots a_m^{r_m} b_1^{s_1}\cdots	b_m^{s_m}	 	 		c_{l_2}^{t_2}\cdots c_{l_k}^{t_{k}}	e_{23}=0
	\end{align*}
	where $ \mathbf{\hat{l}} = \{l_i\}_{i=2}^k $. Again by Lemma \ref{field identities}, we arrive at a contradiction. 
	
	The rest of the proof can be done
	in the same way as in the final paragraph of the proof of the Theorem \ref{linear independence}.  
\end{proof}

\section{Superidentities of $UT_3$ with the grading $( \mathcal{B}^{(0)}, \mathcal{B}^{(1)})$}
Recall that 	$ \mathcal{B}^{(0)} = \textrm{span}_F\{ e_{11},e_{22} , e_{33}, e_{13} \}$ and   $
\mathcal{B}^{(1)} = \textrm{span}_F\{ e_{12},e_{23} \}$.

\begin{propo} 
	The following are polynomial superidentities for $UT_3$ with the grading $(\mathcal{B}^{(0)} , \mathcal{B}^{(1)})$:
	\begin{align}\label{superidentities of B}
		z_1 z_2 z_3, \qquad z_1 w_1, \qquad w_1 z_1, \qquad w_1 w_2,
	\end{align}
	where
	$w_i \in \Omega_i$, $ i=1,2 $.
\end{propo}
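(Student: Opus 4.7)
The plan is to verify each of the four polynomials directly, exploiting the small structure of $\mathcal{B}^{(0)}$ and $\mathcal{B}^{(1)}$. The strategy is entirely analogous to the proof of Proposition \ref{propo superidentities A}: I will first check that every $w\in\Omega_1$ evaluated on $\mathcal{B}^{(0)}$ lands in the one-dimensional subspace $Fe_{13}$, and then read off all the vanishing relations from the multiplication table of matrix units.

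For $z_1z_2z_3$: since $\mathcal{B}^{(1)}=\operatorname{span}_F\{e_{12},e_{23}\}$ and the only nonzero product of two such basis vectors is $e_{12}e_{23}=e_{13}$, we have $\mathcal{B}^{(1)}\mathcal{B}^{(1)}\subseteq Fe_{13}$. Since $e_{13}e_{12}=e_{13}e_{23}=0$, a third multiplication gives $\mathcal{B}^{(1)}\mathcal{B}^{(1)}\mathcal{B}^{(1)}=0$, so $z_1z_2z_3\in\textrm{Id}(\mathcal{B}^{(0)},\mathcal{B}^{(1)})$.

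For the remaining three polynomials I would first establish the key claim that any evaluation of $w\in\Omega_1$ on $\mathcal{B}^{(0)}$ lies in $Fe_{13}$. Taking $A=ae_{11}+be_{22}+ce_{33}+de_{13}$ and $B=\alpha e_{11}+\beta e_{22}+\gamma e_{33}+\delta e_{13}$, a short computation yields
\[[A,B]=((a-c)\delta-(\alpha-\gamma)d)e_{13}\in Fe_{13},\]
which handles $w=[y_1,y_2]$. For $w=y_1^q-y_1$, a straightforward induction on $n$ gives
\[A^n=a^ne_{11}+b^ne_{22}+c^ne_{33}+d(a^{n-1}+a^{n-2}c+\cdots+c^{n-1})e_{13};\]
setting $n=q$ and using $x^q=x$ for all $x\in F$ shows $A^q-A\in Fe_{13}$, in precisely the same manner as the computation following Proposition \ref{propo superidentities A}.

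Once this is in place, the last three identities fall out mechanically from the annihilation facts $e_{13}\cdot\mathcal{B}^{(1)}=0$, $\mathcal{B}^{(1)}\cdot e_{13}=0$ and $e_{13}^2=0$: the identities $w_1z_1$ and $z_1w_1$ vanish since $w_1$ evaluates into $Fe_{13}$ and both $e_{12}$ and $e_{23}$ annihilate $e_{13}$ on either side, while $w_1w_2$ vanishes because both evaluations lie in $Fe_{13}$ and $e_{13}^2=0$. I do not foresee any genuine obstacle; the only bookkeeping required is the induction giving the closed form for $A^n$, and this is entirely routine.
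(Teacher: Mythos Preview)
Your proof is correct and follows essentially the same approach as the paper: the paper's observation that ``if $C\in\mathcal{B}^{(0)}$ has zeros on the diagonal then $CD=DC=0$ for all $D\in\mathcal{B}^{(1)}$'' is exactly your annihilation fact $e_{13}\cdot\mathcal{B}^{(1)}=\mathcal{B}^{(1)}\cdot e_{13}=0$, since an element of $\mathcal{B}^{(0)}$ with zero diagonal lies in $Fe_{13}$. Your explicit formulas for $[A,B]$ and $A^n$ simply unpack in more detail what the paper sketches by reference to the analogous computation after Proposition~\ref{propo superidentities A}.
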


\begin{proof}
	We begin with the following observation, if
	$C \in \mathcal{B}^{(0)}$ is a matrix with zeros on the diagonal, then $CD = DC = 0$ for all $D \in \mathcal{B}^{(1)}$. 
	Thus, since  the evaluations of $z_1 z_2, [y_1,y_2]$ and $y_1^q-y_1 $ have zeros on the diagonal, 
	$ \textrm{Id}(\mathcal{B}^{(0)} , \mathcal{B}^{(1)}) $
	contains the first three polynomials from (\ref{superidentities of B}). 
	On the other hand, note that the product of two matrices in $ \mathcal{B}^{(0)} $, with zeros on the diagonal, is zero. 
	Then, $w_1w_2 \in \textrm{Id}(\mathcal{B}^{(0)}, \mathcal{B}^{(1)})$.
\end{proof}

Consider $N=\langle w_1 z_1 ,z_1w_1, w_1  w_2 \mid w_i \in \Omega_i \rangle_{T_2}$  with $\Omega_i=\{ [y_{2i-1} , y_{2i}], y_{2i}^q - y_{2i} \},i=1,2$.

\begin{lemma}\label{lemma one variable B}
	
	If
	$p$ is a commutator of variables in $Y\cup Z$ of degree $ 1 $ in  $Z$, 
	then, $p = \pm [ z_j, y_{1}^{(s_1)}, \ldots, y_{{m}}^{(s_{m})} ]  + g\mod N$, 
	where  $0 \leqslant	s_1,\ldots,s_m <q$,	 $ g $ is a linear combination of products of at least two ordered commutators of variables in $ Y\cup Z $ and
	$ j,m \in \mathbb{N}$.
\end{lemma}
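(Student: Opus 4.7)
The plan is to mimic the proof of Lemma \ref{lemma one variable}, exploiting the fact that $N$ contains the additional generator $z_1 w_1$ in order to eliminate the ``$z_j h$'' residue that appeared there. First I would apply Lemma \ref{lema comutador ordenado} to decompose $p = v + g_0$, where $v$ is a linear combination of ordered commutators of degree $1$ in $Z$ and $g_0$ is a linear combination of products of at least two ordered commutators. Since $z_j$ is strictly larger than every $y_i$ under the chosen order, the condition $x_{i_1} > x_{i_2} \leqslant x_{i_3} \leqslant \ldots \leqslant x_{i_m}$ forces $z_j$ to occupy either the first or the last position of every ordered commutator appearing in $v$. Hence one can write
\[ v = \alpha\,[z_j, y_{j_1}^{(s_1)}, \ldots, y_{j_n}^{(s_n)}] + \sum_{\delta} \beta_{\delta}\,[p_{\delta}, z_j], \]
with $\alpha = \pm 1$ (tracking signs through the Jacobi rearrangements applied to the single commutator $p$) and each $p_\delta$ an ordered commutator of variables in $Y$ of length at least two.

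Next I would show that each $[p_\delta, z_j]$ vanishes modulo $N$. Peeling off the outermost bracket of $p_\delta$ gives $p_\delta = [f, h]$ with $f, h \in \mathcal{F}^{(0)}$, so (since $N$ is a $T_2$-ideal) $p_\delta z_j$ is a specialization of $[y_1, y_2]\,z_1 \in N$ and $z_j p_\delta$ is a specialization of $z_1\,[y_1, y_2] \in N$. Both therefore lie in $N$, and consequently $[p_\delta, z_j] = p_\delta z_j - z_j p_\delta \in N$. This already gives $p = \alpha\,[z_j, y_{j_1}^{(s_1)}, \ldots, y_{j_n}^{(s_n)}] + g_0 \mod N$.

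To finish, I would invoke Lemma \ref{lemma z variable q commutator} (valid modulo $N$, since $M \subseteq N$) to replace the powers $s_i$ by integers $s_i' < q$ at the cost of extra terms of the form $f_\delta p_\delta$, each of which is a product of two ordered commutators and so may be absorbed into $g$ together with $g_0$. The main obstacle is this elimination of the $[p_\delta, z_j]$ terms: in Lemma \ref{lemma one variable} only $w_1 z_1 \in M$ was available, so only $p_\delta z_j$ could be killed and a residual $z_j h$ survived, whereas here the symmetric generator $z_1 w_1$ also kills $z_j p_\delta$. This is precisely what permits the stronger conclusion without a residue and with coefficient $\pm 1$ rather than an arbitrary scalar.
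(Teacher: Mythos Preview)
Your proposal is correct and follows essentially the same route as the paper, which simply cites Lemma~\ref{lemma one variable} and Lemma~\ref{lemma z variable q commutator} together with the inclusion $M\subseteq N$; you have merely unpacked the first citation by applying Lemma~\ref{lema comutador ordenado} directly and then eliminating the residual $[p_\delta,z_j]$ terms using both generators $w_1z_1$ and $z_1w_1$ of $N$, which is exactly the implicit extra step hidden behind the paper's one-line proof. The only quibble is your (and the paper's) claim that the leading coefficient is literally $\pm 1$: your justification ``tracking signs through the Jacobi rearrangements'' is vague, and in fact the coefficient can be $0$ (e.g.\ for $p=[y_1,y_2,z_1]$, which already lies in $N$), but this harmless imprecision is inherited from the statement itself and does not affect any later use of the lemma.
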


\begin{proof}Follows of the Lemma \ref{lemma one variable} and Lemma \ref{lemma z variable q commutator}, since $M \subseteq N$.
\end{proof}

\begin{coro}\label{coro one variable B}
	If 
	$p$ is a commutator of variables in $Y\cup Z$ of degree $ 2 $ in  $Z$,  
	then, $ p = \pm [ z_j, y_{1}^{(s_1)}, \ldots, y_{{m}}^{(s_{m})} , z_k	]  + g\mod N $,
	where $0 \leqslant	s_1,\ldots,s_m <q$,
	$ g $ is a polynomial as in Lemma \ref{lemma one variable B} and $ j,k,m \in \mathbb{N} $.
\end{coro}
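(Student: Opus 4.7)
My plan is to mirror the proof of Lemma \ref{lemma one variable B} one level up: I would peel off the outer $z_k$ and reduce the degree-$2$ case to the already-established degree-$1$ statement. First, I would apply Lemma \ref{lema comutador ordenado} to write $p = v + v'$, with $v$ a linear combination of ordered commutators (still of degree $2$ in $Z$) and $v'$ a linear combination of products of at least two ordered commutators; the summand $v'$ is immediately absorbed into $g$. Using the convention $y_i < z_j$ for all $i,j$ and the definition $x_{i_1} > x_{i_2} \leqslant x_{i_3} \leqslant \cdots \leqslant x_{i_r}$ of an ordered commutator, the two $Z$-variables in any ordered commutator inside $v$ are forced into the extreme positions: $x_{i_1}$ must be a $z$ because it is the maximum, and the other $z$ must occupy the position $x_{i_r}$ because from position $2$ onward the sequence is nondecreasing and every $z$-variable exceeds every $y$-variable. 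Grouping consecutive equal $y$'s into powers and using that the ordered form is uniquely determined by the multiset of variables, $v$ reduces to a $\pm 1$ scalar multiple of a single expression $[z_j, y_1^{(s_1)}, \ldots, y_m^{(s_m)}, z_k]$ with $s_i \geqslant 0$.

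Next, I would cap the exponents $s_i$. Treating $[z_j, y_1^{(s_1)}, \ldots, y_m^{(s_m)}]$ as an inner degree-$1$ commutator, Lemma \ref{lemma z variable q commutator} rewrites it modulo $M$ (hence modulo $N$, since $M \subseteq N$) as $[z_j, y_1^{(s_1')}, \ldots, y_m^{(s_m')}] + \sum_\delta \alpha_\delta f_\delta p_\delta$, with $0 \leqslant s_i' < q$, each $f_\delta$ of the bounded-exponent shape $[z_j, y_1^{(t_1)}, \ldots, y_m^{(t_m)}]$ with $0 \leqslant t_i < q$, and each $p_\delta$ an ordered $q$-commutator in $Y$. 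Taking the outer commutator with $z_k$ and expanding via Leibniz gives $[f_\delta p_\delta, z_k] = f_\delta [p_\delta, z_k] + [f_\delta, z_k] p_\delta$. The term $[f_\delta, z_k] p_\delta$ is a product of two ordered commutators (the first factor $[z_j, y_1^{(t_1)}, \ldots, y_m^{(t_m)}, z_k]$ is itself ordered, since $y_i < z_k$ for all $i$), so it is absorbed into $g$; the remaining term $f_\delta [p_\delta, z_k]$ I plan to show lies in $N$ outright.

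The main obstacle will be verifying that $[p_\delta, z_k] \in N$ for every ordered $q$-commutator $p_\delta$, which does not follow from the generators of $N$ in a single step. I would proceed by induction on the number of $y$-variables in $p_\delta$: the base cases $p_\delta = [y_a, y_b]$ and $p_\delta = y_c^q - y_c$ satisfy both $p_\delta z_k, z_k p_\delta \in N$ by direct $T_2$-substitution into the generators $w_1 z_1$ and $z_1 w_1$; for the inductive step, if $u z_k, z_k u \in N$, then because $N$ is a $T_2$-ideal closed under the substitution of odd polynomials (for example, plugging $[y, z_k]$ into the generator $w_1 z_1$), both $(u y) z_k$ and $z_k (u y)$ remain in $N$, which in turn yields $[u, y] z_k, z_k [u, y] \in N$. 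Assembling the pieces, every error term introduced along the way has either vanished modulo $N$ or has been pushed into $g$, leaving exactly $\pm [z_j, y_1^{(s_1')}, \ldots, y_m^{(s_m')}, z_k] + g \pmod{N}$ as required.
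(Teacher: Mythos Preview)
Your argument is correct and follows the same two-step skeleton as the paper's proof: first use Lemma~\ref{lema comutador ordenado} to bring $p$ into the shape $\pm[z_j,y_1^{(t_1)},\ldots,y_m^{(t_m)},z_k]$ plus a junk term $g$, then cap the inner exponents below $q$. The only difference is in packaging. The paper invokes Lemma~\ref{lemma one variable B} directly on the inner degree-$1$ commutator $[z_j,y_1^{(t_1)},\ldots,y_m^{(t_m)}]$, so the error already comes out as a sum $g'$ of products of at least two ordered commutators; one Leibniz expansion then shows $[g',z_k]$ is again of that type, and the proof ends. You instead drop one level and invoke Lemma~\ref{lemma z variable q commutator}, whose error terms $f_\delta p_\delta$ involve ordered $q$-commutators $p_\delta$; after Leibniz you then need a separate induction to show $[p_\delta,z_k]\in N$. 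That induction works (strengthening the hypothesis to ``$uz',\,z'u\in N$ for every $z'\in\mathcal F^{(1)}$'' makes the step $u\mapsto[u,y]$ go through cleanly), but it is extra effort that vanishes if you cite Lemma~\ref{lemma one variable B} as the paper does. One small imprecision: your claim that $[f_\delta,z_k]\,p_\delta$ is ``a product of two ordered commutators'' is not literally true when $p_\delta = y^q-y$; but that term still lies in $N$ (e.g.\ via $f_\delta(y^q-y)\in N$ from the generator $z_1(y_2^q-y_2)$ and then bracketing with $z_k$), so the conclusion is unaffected.
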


\begin{proof}
	By Lemma \ref{lema comutador ordenado}, $ p $ has the form $ p = \pm [ z_j, y_{1}^{(t_1)}, \ldots, y_{{m}}^{(t_{m})} , z_k	]  + g\mod N $ where $ g $ is as in Lemma \ref{lemma one variable B} and $ t_i $ are integers. By  Lemma \ref{lemma one variable B} there exists $ 0 \leqslant s_1, \ldots,s_m <q $ such that 
	\begin{align*}
		p = \pm [ z_j, y_{1}^{(s_1)}, \ldots, y_{{m}}^{(s_{m})} , z_k	]  + [g',z_k] + g\mod N
	\end{align*} 
	where $ g' $ has the same property as $ g $.
	Moreover,
	by using the identity $ [x_1 x_2, x_3] = x_1[x_2,x_3] + [x_1 x_3] x_2 $ it is easy to verify that $ [g',z_k] $ is a linear combination of products of at least two commutators of variables in $ Y \cup Z $.
\end{proof}

Denote by $Q$ the $T_2$-ideal generated by the set (\ref{superidentities of B}).   We have that $ N \subseteq	Q $.

\begin{lemma}\label{lemma ppQ}
	Let 
	$p_1,p_2$ be two commutators of variables in $ Y\cup Z $.
	Suppose that 
	the product $p_1p_2 \notin Q$. Then $p_1$ and $p_2$ have degree $ 1 $ in $Z$.
	Moreover, if $p_3$ is another commutator of variables in $Y\cup Z $ then $p_1p_2p_3 \in Q$. 
\end{lemma}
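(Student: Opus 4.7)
My plan is to prove the contrapositive of the first assertion by enumerating the cases for the pair $(\deg_Z p_1,\deg_Z p_2)$; the ``Moreover'' clause will then follow cheaply.  The small cases $(0,0)$, $(0,1)$, $(1,0)$ are immediate: since each $p_i$ is a commutator of length at least $2$, I write it as $[a,b]$ with $a,b$ of the appropriate parity, and substitute into the generators $[y_1,y_2][y_3,y_4]$, $[y_1,y_2]z_1$ and $z_1[y_1,y_2]$ of $Q$ respectively.

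The case $\deg_Z p_1+\deg_Z p_2\geqslant 3$ is the heart of the argument.  I would expand the product as a linear combination of monomials, each of which has at least three $Z$-variables, and so can be written as $u_0\,z_{j_1}u_1\,z_{j_2}u_2\,z_{j_3}\,w$ with $u_0,u_1,u_2$ words in $Y$.  Because each $z_{j_i}u_i\in\mathcal{F}^{(1)}$, the $T_2$-substitution $z_1\mapsto z_{j_1}u_1$, $z_2\mapsto z_{j_2}u_2$, $z_3\mapsto z_{j_3}$ applied to the generator $z_1z_2z_3$ yields $z_{j_1}u_1z_{j_2}u_2z_{j_3}\in Q$, and multiplication by $u_0$ on the left and $w$ on the right places the whole monomial in $Q$.

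The only remaining case is $(0,2)$, with $(2,0)$ symmetric.  Writing $p_1=[a,b]$ with $a,b\in\mathcal{F}^{(0)}$ and $p_2=[x_1,\ldots,x_k]$ with exactly two of the $x_i$ in $Z$, I would split on $k$ and on whether $x_k$ lies in $Y$ or $Z$.  If $k=2$ then $p_2=[z_i,z_j]$, and $p_1[z_i,z_j]=[a,b]z_iz_j-[a,b]z_jz_i$ is in $Q$ by substituting into $[y_1,y_2]z_1$ and right-multiplying.  If $k\geqslant 3$, set $A=[x_1,\ldots,x_{k-1}]$: when $x_k\in Y$ both $A,x_k$ lie in $\mathcal{F}^{(0)}$ and $p_1[A,x_k]$ comes from substituting into $[y_1,y_2][y_3,y_4]$; when $x_k\in Z$ one has $\deg_Z A=1$ and $A\in\mathcal{F}^{(1)}$ is a commutator of length $\geqslant 2$, so $p_1A\in Q$ by the $(0,1)$ case and $p_1x_k\in Q$ by $[y_1,y_2]z_1$, and hence $p_1[A,x_k]=p_1Ax_k-p_1x_kA\in Q$.

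For the ``Moreover'' clause, if some $p_i$ has $\deg_Z p_i\neq 1$ then one of the pairs $p_1p_2$, $p_2p_3$ has type $\neq(1,1)$ and already lies in $Q$ by the first part; otherwise every $p_i$ lies in $\mathcal{F}^{(1)}$ and the substitution $z_i\mapsto p_i$ in $z_1z_2z_3$ shows $p_1p_2p_3\in Q$.  The main technical obstacle I anticipate is the subcase $(0,2)$ with $x_k\in Z$, the only instance that does not embed directly into a single generator of $Q$ and instead requires combining two earlier cases.
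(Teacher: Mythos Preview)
Your proof is correct and follows essentially the same route as the paper's: both bound the total $Z$-degree by~$2$ via $z_1z_2z_3\in Q$, then show that $\deg_Z p_i=0$ forces $p_1p_2\in Q$ by writing the other commutator as $[u,x_t]$ and splitting on whether $x_t\in Y$ or $x_t\in Z$. The only organizational difference is that the paper handles your cases $(0,0),(0,1),(0,2)$ in one stroke by first observing that $p_2\in\mathcal{F}^{(1)}$ already gives $p_1p_2\in\langle[y_1,y_2]z_1\rangle_{T_2}$, while you enumerate them separately and, conversely, spell out the total-degree-$\geqslant 3$ reduction to $z_1z_2z_3$ that the paper merely asserts.
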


\begin{proof}
	Since $z_1z_2z_3 \in Q$ we have that the total sum of degree of $p_1$ and $p_2$ in  $Z$  is no more that $2$.
	
	Suppose that $p_1$ have no variable in $Z$, (e.g.  $p_1=[y_{3},y_4,y_1]$).
	Since $[y_1 , y_2]z_1 \in Q$ we have that $p_2 \in \mathcal{F}^{(0)}$. Write $p_2=[u,x_{t}] = u x_t - x_t u$.
	If $x_{t} \in Z$ we have that $u\in \mathcal{F}^{(1)}$, therefore,  $ p_1 u , p_1 x_{t} \in \langle [y_1 , y_2]z_1 \rangle_{T_2}$ and so
	$p_1p_2  \in \langle [y_1 , y_2]z_1 \rangle_{T_2} $,
	which is absurd. Otherwise, if $x_{t} \in Y$ we have that 
	$u\in \mathcal{F}^{(0)}$ and
	$p_1p_2=p_1[u,x_{t}] \in \langle [y_1,y_2] [y_3,y_4]\rangle_{T_2}$, which is absurd too. Thus, the degree of $ p_1 $ in $Z$ is $ \geqslant 1$.	
	Analogously we can obtain that  the degree of  $p_2$ in $Z$ is $ \geqslant 1$. Hence, $p_1$ and $p_2$ have degree $ 1 $ in $Z$.
	
	Now, let $p_3$ be another commutator of variables in $Y\cup Z $. If $p_3$ have no variable in $Z$ then
	$ p_2p_3 \in Q$ by previous paragraph.
	Otherwise, if $p_3$ has degree $\geqslant 1$ then, $p_1p_2p_3\in Q$.
\end{proof}

\begin{propo}
	The vector space $F \langle Y\cup Z\rangle / Q $ is generated module $Q$ by
	\begin{align}\label{generators module Q}
		a)\ 
		y_{1}^{r_1}	\cdots	y_{m}^{r_m} p^{\theta} \qquad
		b) \  y_{1}^{r_1}	\cdots	y_{m}^{r_m}	[z_{j},y_{1}^{(s_1)} , \ldots,y_{m}^{(s_{m})}]	
		[z_{k},y_1^{(t_1)} , \ldots,y_{m}^{(t_{m})}]^{\mu}
	\end{align}  
	where $0 \leqslant r_i,s_i,t_i < q$, $p$ is an ordered $q-$commutator, $\theta, \mu \in \{0,1\}$, $ \displaystyle\sum_{i=1}^{m} r_i +\theta \geqslant	1 $ and $ m,j,k \in \mathbb{N} $.  
	
\end{propo}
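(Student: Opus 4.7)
The plan mirrors the proof of the preceding Proposition for the $\mathcal{A}$-grading, with more cases because the total $Z$-degree may reach $2$ and products of two $Z$-degree-$1$ commutators are no longer forced into $Q$. Starting from the representation (\ref{basis}), I would reduce an arbitrary monomial $y_1^{r_1} \cdots y_m^{r_m} z_1^{s_1} \cdots z_n^{s_n} p_1 \cdots p_l$ modulo $Q$ to a sum of generators of types (a) and (b). Two global bounds come first: by Lemma \ref{lemma ppQ}, $l \leq 2$ (with both commutators of $Z$-degree $1$ when $l = 2$), and any monomial of total $Z$-degree $\geq 3$ lies in $Q$ (by extending $z_1 z_2 z_3 \in Q$ via $T_2$-invariance, since any such monomial admits a factorization $f_1 f_2 f_3 u$ with each $f_i \in \mathcal{F}^{(1)}$). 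I also need the auxiliary identity $z_j p, \, p z_j \in Q$ for every $Y$-commutator $p$, proved by induction on the length of $p$ using $z_1 w_1, w_1 z_1 \in Q$ and the $T_2$-substitution $z_1 \mapsto z_j u$ with $u \in F\langle Y \rangle$.

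With these in hand, I would split on the total $Z$-degree $D$. For $D = 0$, Lemma \ref{triangular basis} (via $\langle w_1 w_2 \mid w_i \in \Omega_i \rangle_T \subseteq Q$) rewrites the pure $Y$-monomial as a sum of type (a) generators. For $D = 1$, either one $z$ is in the preamble and all $p_i$ are $Y$-commutators (the tail lies in $Q$ by the auxiliary identity, leaving $y^R z_j$, which is of type (b) with $\mu = 0$ and all $s_i = 0$), or there is no $z$ in the preamble and $l = 1$ with $p_1$ of $Z$-degree $1$, which Lemma \ref{lemma one variable B} reduces to $\pm [z_j, y_1^{(s_1)}, \ldots, y_m^{(s_m)}]$ modulo $Q$ (the residual $g$ there being a product of $\geq 2$ ordered commutators at $Z$-degree $1$, hence in $Q$ by Lemma \ref{lemma ppQ}). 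For $D = 2$, the sub-cases $(|S|, l) \in \{(2,0), (1,1), (0,2)\}$ are direct: $(2,0)$ gives $y^R z_i z_j$, type (b) with $\mu = 1$ and all $s_l = t_l = 0$; $(1,1)$ gives $y^R z_j [z_k, y_1^{(s_1)}, \ldots, y_m^{(s_m)}]$ after Lemma \ref{lemma one variable B} (type (b) once $z_j$ is read as $[z_j, y_1^{(0)}, \ldots, y_m^{(0)}]$); $(0, 2)$ gives $y^R [z_{j_1}, y^{(s)}][z_{j_2}, y^{(t)}]$ after applying Lemma \ref{lemma one variable B} to each commutator.

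The remaining sub-case $(0, 1)$ with $p_1$ of $Z$-degree $2$ uses Corollary \ref{coro one variable B}: the leading term $[z_j, y^{(s)}, z_k]$ unfolds as $[z_j, y^{(s)}] z_k - z_k [z_j, y^{(s)}]$, each summand already of the type (b) shape; the residual $g$ is a linear combination of products of $\geq 2$ ordered commutators of total $Z$-degree $2$, which Lemma \ref{lemma ppQ} either forces into $Q$ (products of $\geq 3$ factors, or products of exactly $2$ factors with $Z$-degree distribution not $(1, 1)$) or reduces to sub-case $(0, 2)$ already handled. In every sub-case, the reduction of $y^R$ to $r_i < q$ is achieved by applying Lemma \ref{triangular basis} once more, and the correction terms $y^{R''} p' \cdot (\cdots)$ (with $p'$ an ordered $q$-commutator in $Y$) vanish modulo $Q$ because $p' z_j \in Q$ by the auxiliary identity and $p' \cdot [z_j, y^{(s)}] \in Q$ by Lemma \ref{lemma ppQ}. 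The main obstacle is not conceptual but bookkeeping: coordinating Corollary \ref{coro one variable B}, Lemma \ref{lemma ppQ}, and Lemma \ref{triangular basis} through the $D = 2$ subcases so that no non-reducible residue remains after all cancellations.
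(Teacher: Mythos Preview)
Your proposal is correct and follows essentially the same route as the paper: both start from (\ref{basis}), invoke Lemma \ref{lemma ppQ} to bound the commutator tail, normalize the $Z$-degree-$1$ and $Z$-degree-$2$ commutators via Lemma \ref{lemma one variable B} and Corollary \ref{coro one variable B}, and appeal to Lemma \ref{triangular basis} for the pure $Y$-prefix. The only difference is organizational---the paper enumerates the possible tail shapes $\rho \in \{p^\theta,\; p_2,\; z_i p_1^\theta,\; z_i z_j,\; p_1,\; p_1 p_1'\}$ directly rather than splitting on total $Z$-degree $D$ and $(|S|,l)$---but the content is identical.
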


\begin{proof}
	By (\ref{basis}) and by Lemma \ref{lemma ppQ} the vector space $F \langle Y\cup Z\rangle / Q $ is generated module $Q$ by polynomials of the type $y_{1}^{r_1}	\cdots	y_{m}^{r_m} \rho$ where $ \rho $ is one  of the following types of polynomials:
	\begin{align}\label{set propo B}
		p^{\theta}, 	\qquad p_2,	\qquad	z_i p_1^{\theta},	\qquad	z_i z_j,	\qquad	p_1,		\qquad	p_1 p_1',		
	\end{align}
	where
	$p$ a commutator of variables in $Y$, $ \theta = 0 ,1 $,	$ i \leqslant j $,
	$p_1$ and  $ p_1'$ are commutators of variables in $Y\cup Z$ of degree $1$ in $ Z $ and $p_2$ is a commutator of variables in $Y\cup Z$ of degree  $ 2 $ in  $Z $. Moreover, we can suppose that $ r_1,\ldots,r_m <q  $ and that $ p $ is a ordered $ q $-commutator, by Lemma \ref{triangular basis}.

	By Corollary \ref{coro one variable B},  $ p_2 $
	has the form
	\[ [z_{j},y_{1}^{(s_1)} , \ldots,y_{m}^{(s_{m})} ]	z_k	-	z_k [z_{j},y_{1}^{(s_1)} , \ldots,y_{m}^{(s_{m})}]  + g \mod Q, \]
	where $ 0 \leqslant s_1,\ldots,s_m <q $ and $ g $ is a linear combination of two commutators. 
	By Lemma \ref{lemma ppQ} , we can suppose that $ g $ is a linear combination of polynomials of the type $ p_1 p_1' $. 
	Moreover, by Lemma \ref{lemma one variable B}
	it is suffices to consider, instead of 
	$ p_1 $, commutators of type
	$[z_{j},y_{1}^{(s_1)} , \ldots,y_{m}^{(s_{m})}] $, where $0 \leqslant s_1,\ldots,s_m<q $. 
	Thus, by these considerations  the set (\ref{set propo B}) can be rewritten as:
	\begin{align*}
		p^{\theta}, 	\qquad	z_i p_1^{\theta},	\qquad	z_i z_j,		\qquad p_1 z_i,	\qquad	z_j z_i,	\qquad	p_1,	\qquad	p_1 p_1'.		
	\end{align*}
	Finally, by Lemma \ref{lemma one variable B} again, we can also suppose that $ p_1' $ is in the form $[z_{k},y_{1}^{(t_1)} , \ldots,y_{m}^{(t_{m})}] $ where  $0\leqslant t_i <q $, as desired.
\end{proof}

\begin{theorem}
	Let $F$ be a finite field of $q$ elements.
	A basis for the superidentities of $UT_3$ with the grading $(\mathcal{B}^{(0)},\mathcal{B}^{(1)})$ is given by set (\ref{superidentities of B}). Moreover, a linear basis for the vector space $F\langle Y\cup Z \rangle / \emph{Id}(\mathcal{B}^{(0)},\mathcal{B}^{(1)})$ consist of the set (\ref{generators module Q}).
\end{theorem}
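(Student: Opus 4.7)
The plan is to mirror the structure of the proof of Theorem~\ref{linear independence of A}. The previous proposition shows that the set (\ref{generators module Q}) spans $F\langle Y\cup Z\rangle/Q$, and we already know $Q \subseteq \textrm{Id}(\mathcal{B}^{(0)},\mathcal{B}^{(1)})$. Hence it is enough to show that (\ref{generators module Q}) is linearly independent modulo $\textrm{Id}(\mathcal{B}^{(0)},\mathcal{B}^{(1)})$; both the basis of the $T_2$-ideal and the basis of the quotient follow exactly as in the previous theorems.

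Given a linear combination $f$ of elements of (\ref{generators module Q}) lying in $\textrm{Id}(\mathcal{B}^{(0)},\mathcal{B}^{(1)})$, I would decompose $f=f_0+f_1+f_2$ according to the $Z$-degree ($f_0$ collects type~(a), $f_1$ collects type~(b) with $\mu=0$, and $f_2$ collects type~(b) with $\mu=1$). Setting every $z_j=0$ kills $f_1+f_2$ and shows $f_0\in\textrm{Id}(\mathcal{B}^{(0)},\mathcal{B}^{(1)})\cap F\langle Y\rangle$; since $\mathcal{B}^{(0)}$ contains a copy of $UT_2$, Lemma~\ref{triangular basis} forces $f_0=0$. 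To isolate $f_1$, I would consider the graded subalgebra $\mathcal{C}^{(0)}=\textrm{span}_F\{e_{22},e_{33}\}$, $\mathcal{C}^{(1)}=\textrm{span}_F\{e_{23}\}$, which is isomorphic to $UT_2$ with the canonical grading. Every evaluation of a summand of $f_2$ into $\mathcal{C}$ produces a scalar multiple of $e_{23}\cdot e_{23}=0$, so $f_2$ vanishes identically on $\mathcal{C}$; hence $f_1\in\textrm{Id}^{gr}(\mathcal{C})=\textrm{Id}^{gr}(UT_2)$, and since $f_1$ is already a linear combination of elements of the basis (\ref{equation generators}), Theorem~\ref{linear independence} gives $f_1=0$.

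It remains to show $f_2=0$. The key evaluation uses $y_i\mapsto A_i = a_ie_{11}+b_ie_{22}+c_ie_{33}\in\mathcal{B}^{(0)}$; a direct computation yields $[e_{12},A_i]=(b_i-a_i)e_{12}$ and $[e_{23},A_i]=(c_i-b_i)e_{23}$, and therefore
\begin{align*}
A_1^{r_1}\cdots A_m^{r_m}\,[e_{12},A_1^{(s_1)},\ldots,A_m^{(s_m)}]\,[e_{23},A_1^{(t_1)},\ldots,A_m^{(t_m)}] = \prod_{i=1}^m a_i^{r_i}(b_i-a_i)^{s_i}(c_i-b_i)^{t_i}\,e_{13},
\end{align*}
while the analogous product in the reverse order vanishes because $e_{23}e_{12}=0$. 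For each ordered pair $(j_0,k_0)$ of distinct $Z$-indices appearing in $f_2$, substituting $z_{j_0}\mapsto e_{12}$, $z_{k_0}\mapsto e_{23}$ and $z_l\mapsto 0$ for $l\notin\{j_0,k_0\}$ retains only the summands of the form $[z_{j_0},\ldots][z_{k_0},\ldots]$; Lemma~\ref{field identities}, applied to the $3m$ independent scalars $a_i$, $b_i-a_i$, $c_i-b_i$ with exponents strictly below $q$, then forces all the corresponding coefficients to vanish. The diagonal case $j_0=k_0$ is handled by $z_{j_0}\mapsto e_{12}+e_{23}$: the inner commutator splits as $\prod_i(b_i-a_i)^{s_i}e_{12}+\prod_i(c_i-b_i)^{s_i}e_{23}$, and only the cross term $e_{12}e_{23}=e_{13}$ survives in the product of two such sums, reproducing the same monomial formula and hence the same conclusion.

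The main obstacle is the $f_2$ step: one has to choose matrix evaluations concentrated on $e_{13}$ and cleanly separate the contributions of the different ordered pairs $(j,k)$ of $Z$-indices, including the diagonal case $j=k$. The vanishings $e_{12}^2=e_{23}^2=e_{23}e_{12}=0$ do most of the decoupling, and the substitution $z_j\mapsto e_{12}+e_{23}$ takes care of the diagonal case, reducing everything to an application of Lemma~\ref{field identities} in $3m$ scalar variables.
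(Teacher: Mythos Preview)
Your proposal is correct and follows essentially the same approach as the paper. The only differences are cosmetic: the paper chooses the graded $UT_2$-subalgebra $\mathcal{D}^{(0)}=\textrm{span}_F\{e_{11},e_{22}\}$, $\mathcal{D}^{(1)}=\textrm{span}_F\{e_{12}\}$ instead of your $\mathcal{C}$, parametrizes the diagonal matrices as $A_i=a_ie_{11}+(a_i+b_i)e_{22}+(a_i+b_i+c_i)e_{33}$ (so that the resulting monomials are directly $a_i^{r_i}b_i^{s_i}c_i^{t_i}$ rather than involving differences), and reduces by substitution to $j,k\in\{1,2\}$ before running through the four cases $f_{11},f_{12},f_{21},f_{22}$ with exactly the same evaluations $e_{12}$, $e_{23}$, $e_{12}+e_{23}$ that you use.
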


\begin{proof}
	We will only
	to show that the set (\ref{generators module Q}) is linearly independent module $\textrm{Id}(\mathcal{B}^{(0)},\mathcal{B}^{(1)})$. 
	Let $f$ be a linear combination of polynomials in (\ref{generators module Q}) such that $f\in \textrm{Id}(\mathcal{B}^{(0)},\mathcal{B}^{(1)})$.
	Write $ f=f_y + f_z $ where $ f_y $ and $ f_z $ are the summands  of $ f $ which the terms are of the type (\ref{generators module Q}.$ a) $) and (\ref{generators module Q}.$ b) $), respectively.
	Since neither variable in $ Z $ occur in $ f_y $ we have that $ f_y \in  \textrm{Id}(\mathcal{B}^{(0)},\mathcal{B}^{(1)})$ and then $ f_z \in  \textrm{Id}(\mathcal{B}^{(0)},\mathcal{B}^{(1)})$.

	Note that (\ref{equation f_0}) is yet valid for $ f_y $ and $ \mathcal{B} $, inside of $ f_0 $ and $ \mathcal{A} $, respectively.
	The algebra $ \mathcal{B}^{(0)} $ contains a copy of 
	$ UT_2 $. 
	Thus $ f_y=0 $ 
	by Lemma \ref{triangular basis}.

	Let us write $f_z = f_0 + f_1$ 
	where
	$ f_0 $ and $ f_1 $ are the summands of $ f_z $ such that their terms satisfy $\mu=0$ and $\mu=1$, respectively.
	Observe that $ f_1 \in \textrm{Id}^{gr}(UT_2)$ 
	since  each  product
	$ 	[z_{j},y_{1}^{(s_1)} , \ldots,y_{m}^{(s_{m})}]	
	[z_{k},y_1^{(t_1)} , \ldots,y_{m}^{(t_{m})}] \in \textrm{Id}^{gr}(UT_2)$.
	
	We claim that $ f_z\in  \textrm{Id}^{gr}(UT_2)$.
	For this, we will consider $UT_2$ as a graded subalgebra of $ \mathcal{B}^{(0)} \oplus \mathcal{B}^{(1)} $. 
	More specifically,  
	set
	$ \mathcal{D}^{(0)} = \textrm{span}_F \{e_{11}, e_{22}\}$ and $ \mathcal{D}^{(1)} = \textrm{span}_F \{e_{12}\} $ linear subspaces of $UT_3$.
	One can verify that 
	$ \mathcal{D}^{(0)} \oplus \mathcal{D}^{(1)} $ is a graded subalgebra of $ \mathcal{B}^{(0)} \oplus \mathcal{B}^{(1)} $ isomorphic to 
	$ UT_2 $ with the canonical graduation.
	Hence, $	\textrm{Id}(\mathcal{B}^{(0)} , \mathcal{B}^{(1)} )	\subseteq \textrm{Id}(\mathcal{D}^{(0)} , \mathcal{D}^{(1)})  = 	\textrm{Id}^{gr}(UT_2) 	  $.  Therefore $ f_z \in \textrm{Id}^{gr}(UT_2)  $. This is implies  $ f_0 \in \textrm{Id}^{gr}(UT_2) $ and soon
	$f_0 = 0 $ by Theorem \ref{linear independence}.
	
	Now, let us write $ f_1 = \displaystyle \sum_{jk} f_{jk}$ where the $ f_{jk} $ is obtained of $ f_1 $ fixing $ z_j $ and $ z_k $ in 
	$$ y_{1}^{r_1}	\cdots	y_{m}^{r_m}	[z_{j},y_{1}^{(s_1)} , \ldots,y_{m}^{(s_{m})}]	
	[z_{k},y_1^{(t_1)} , \ldots,y_{m}^{(t_{m})}] .$$ 
	Without loss of generality we may assume that $ j,k \in \{1,2\} $, for this, we just substitute $ 0 $ for the remaining variables in $ Z $.  
	That is, we may assume that 
	$ f_1= f_1(y_1,\ldots,y_m,z_1,z_2) =  f_{11}+	f_{12}+	f_{21}+f_{22}$, since we can rename the variables when necessary.

	For making
	$ z_2 =0$  we have  $ f_1(y_1,\ldots,y_m,z_1,0) =  f_{11} \in \textrm{Id}(\mathcal{B}^{(0)} , \mathcal{B}^{(1)} )$.
	Take 
	$A_i=a_ie_{11}+ (a_i+b_i) e_{22}+(a_i+b_i+c_i)e_{33} \in \mathcal{B}^{(0)}$ 
	where $ a_i,b_i,c_i \in F , i=1,\ldots m$. 
	One can verify that  
	$[e_{12}, A_1^{(s_1)},	\ldots, A_m^{(s_m)} ] = b_1^{s_1}\cdots b_m^{s_m} e_{12}$ and 
	$[e_{23}	, A_1^{(s_1)},	\ldots, A_m^{(s_m)} ] = c_1^{s_1}\cdots c_m^{s_m}e_{23} $.
	Therefore,
	\begin{align*}
		\ A_1^{r_1}\cdots A_m^{r_m} 	[e_{12}+e_{23}	, A_1^{(s_1)},	\ldots, A_m^{(s_m)} ]&	[e_{12}+e_{23}, A_1^{(t_1)},	\ldots, A_m^{(t_m)} ]	=	\nonumber  \\
		=&	a_1^{r_1}\cdots a_m^{r_m}	b_1^{s_1}\cdots b_m^{s_m} 	c_1^{t_1}\cdots c_m^{t_m}	e_{13}.
	\end{align*} 
	If
	$ 
	\displaystyle{f_{11} = \sum_{ r_i, s_i,t_i =0	}^{q-1}	\alpha_{\mathbf{r,s,t}}	y_{1}^{r_1}	\cdots	y_{m}^{r_m}	[z_1,y_{1}^{(s_1)} , \ldots,y_{m}^{(s_{m})}]	
		[z_{1},y_1^{(t_1)} , \ldots,y_{m}^{(t_{m})}]}
	$
	then 
	\begin{align*}
		f_{11}	(A_1,\ldots,A_k,e_{12}+e_{23}) = \sum_{ r_i, s_i,t_i =0}^{q-1}	\alpha_{\mathbf{r},\mathbf{s},\mathbf{t}}	
		a_1^{r_1}\cdots a_m^{r_m}	b_1^{s_1}\cdots b_m^{s_m} 	c_1^{t_1}\cdots c_m^{t_m}	e_{13},
	\end{align*} 
	where $ \alpha_{\mathbf{r,s,t}} \in F $ and $ \textbf{r} = {\{r_i\}}_{i=1}^m,\mathbf{s} = {\{s_i\}}_{i=1}^m ,	\mathbf{t}= {\{t_i\}}_{i=1}^m $ are $ m $-uples. 
	Hence, since  $ f_{11}	(A_1,\ldots,A_k,e_{12}+e_{23})	= 0 $ and $ a_i,b_i,c_i $ are arbitrary, the coefficients $ \alpha_{r,s,t} $ are equals to zero by 
	Lemma \ref{field identities}. 
	
	Analogously, by making $ z_1=0 $ and $ z_2=e_{12} + e_{23}$, one can to show that the coefficients of $ f_{22}  $ are all equals to zero. 
	Furthermore,  the evaluation  $ z_1$ by $e_{12} $, $ z_2$ by ${e_{23}} $  solves the problem for  $ f_{12} $, and the evaluation $ z_1$ by $e_{23} $, $ z_2$ by ${e_{12}} $ serves for $ f_{21} $.

\end{proof}

\bibliographystyle{tfnlm}
 
\bibliography{artigoversaofinalbibtex}

\end{document}